\newtheorem{theorem}{Theorem}
\newtheorem{proposition}{Proposition}
\newtheorem{lemma}{Lemma}
\newtheorem{definition}{Definition}
\title {On Submodular Search and Machine Scheduling}
\author{
Robbert Fokkink \\
Department of Applied Mathematics\\
TU Delft\\
Mekelweg 4\\
2628 CD Delft\\
Netherlands \\
\href{mailto:r.j.fokkink@tudelft.nl}{\tt r.j.fokkink@tudelft.nl} \\
\and
Thomas Lidbetter\\
Department of Management Science\\ and Information Systems\\
Rutgers Business School\\
Newark, New Jersey\\
USA\\
\href{mailto:tlidbetter@business.rutgers.edu}{\tt tlidbetter@business.rutgers.edu} \\
\and
L\'aszl\'o A. V\'egh\\
Department of Mathematics\\
London School of Economics\\
London WC2A 2AE\\
United Kingdom\\
\href{mailto:L.Vegh@lse.ac.uk}{\tt l.vegh@lse.ac.uk} \\
}
\begin{document}
\maketitle

\begin{abstract}
Suppose some objects are hidden in a finite set $S$ of hiding places which must be examined one-by-one.  The cost of searching subsets of $S$ is given by a submodular function and the probability that all objects are contained in a subset is given by a supermodular function. We seek an ordering of $S$ that finds all the objects in minimal expected cost. This problem is NP-hard and we give an efficient combinatorial $2$-approximation algorithm, generalizing analogous results in scheduling theory. We also give a new scheduling application $1|prec|\sum w_A h(C_A)$, where a set of jobs must be ordered subject to precedence constraints to minimize the weighted sum of some concave function $h$ of the completion times of {\em subsets} of jobs. We go on to give better approximations for submodular functions with low {\em total curvature} and we give a full solution when the problem is what we call {\em series-parallel decomposable}. Next, we consider a zero-sum game between a cost-maximizing Hider and a cost-minimizing Searcher. We prove that the equilibrium mixed strategies for the Hider are in the base polyhedron of the cost function, suitably scaled, and we solve the game in the series-parallel decomposable case, giving approximately optimal strategies in other cases.
\end{abstract}

\noindent

\section{Introduction}
\label{sec:intro}
Consider a search problem with a finite, non-empty set $S$ of hiding locations, a {\em cost function} $f\colon 2^S\to \mathbb [0,\infty)$ and a {\em weight function} $g\colon 2^S\to \mathbb [0,\infty)$. An ordering, or {\em search} $\pi$ of $S$ must be chosen. For a given ordering $\pi$ and an element $j$ of $S$, we denote by $S_j=S_j^\pi$ the union of $j$ and all the locations that precede $j$ in the ordering $\pi$. The {\em search cost} of $j$ under $\pi$ is $f(S_j^\pi)$. 
We assume that a Hider has hidden some objects in these locations such that if they are searched according to the ordering $\pi$, the probability that all the objects are in $S_j^\pi$ is $g(S_j^\pi)$. 
%(Although this probabilistic interpretation of $g$ means that we require $g(\emptyset)=0$ and $g(S)=1$, the problem is well defined for arbitrary $g\colon 2^S\to \mathbb [0,\infty)$.) 

We study two variants of the problem. In the \emph{optimization setting}, the Searcher knows the probability distribution used by the Hider, that is, she has oracle access to the function $g$.
In the \emph{game setting}, the objects are adversarially hidden, and thus we consider a two-person zero sum game between the Searcher and the Hider. 
%These problems are well-defined for any arbitrary $f$ and $g$; although the probabilistic interpretation of $g$ means that we require $g(\emptyset)=0$ and $g(S)=1$, we could even omit these assumptions. 
In this paper, we restrict our attention to cases when $f$ is submodular and non-decreasing and $g$ is supermodular and non-decreasing.

%, in which case we call this problem the {\em submodular search problem}, and if $\pi$ minimizes $c(\pi)$, we say $\pi$ is {\em optimal}.

\paragraph{The optimization setting} The Searcher can minimize her expected cost by finding an ordering $\pi$ that minimizes the expected search cost with respect to $f$ and $g$, which we write as 
\[ 
c(\pi) = \sum_{j=1}^n \mathbf (g(S_j^\pi)-g(S_j^\pi - j)) f(S_j^\pi). 
\]
We call this problem the {\em submodular search problem}, and if $\pi$ minimizes $c(\pi)$, we say $\pi$ is {\em optimal}.
The equivalent {\em submodular ordering problem} was introduced by Pisaruk~\cite{Pisaruk92}. Pisaruk showed that in the worst case the problem takes exponential time, and he gave a 2-approximation algorithm.
Our first main result, proved in Section~\ref{sec:problem}, provides a simpler and more direct 2-approximation algorithm. Our key new insight is extending {\em Smith's rule}~\cite{Smith} for optimal scheduling to this setting (Theorem~\ref{thm1}). This implies that {\em any} optimal search respects a generalized version of a {\em Sidney decomposition}~\cite{Sidney}; furthermore, any search that respects this decomposition is a $2$-approximation for an optimal search.
\iffalse
and when $g$ is modular, it is equivalent to a problem considered by the first author~\cite{FKR}. The problem is NP-hard, as we explain later, and our main result, proved in Section~\ref{sec:problem} is an algorithm running in strongly polynomial time which approximates an optimal search by a factor of $2$. An alternative approach to finding a search with approximation ratio $2$ was given in~\cite{Pisaruk92}, but we improve upon this by proving a natural generalization of {\em Smith's rule}~\cite{Smith} for optimal scheduling (Theorem~\ref{thm1}). This key result implies that {\em any} optimal search respects a generalized version of a {\em Sidney decomposition}~\cite{Sidney}; furthermore, any search that respects this decomposition is a $2$-approximation for an optimal search. %The correctness of Pisaruk's algorithm in~\cite{Pisaruk92} also follows directly from Theorem~\ref{thm1}, as we show in Subsection~\ref{sec:Pisaruk}. 
\fi

We give stronger approximation guarantees for special classes of functions. In Subsection~\ref{sec:curv} we show that our algorithm performs well when the functions $f$ and $g$ are close to being modular.
%^\#$ are close to being modular, where $g^\#$ is the dual function given by $g^\#(A)=g(S)-g(\bar{A})$ and $\bar{A}=S-A$ is the complement of $A$. 
In particular, we show that the algorithm performs well for low values of the {\em total curvature} of $f$ and $g^\#$ (roughly speaking, the extent they differ from modular functions).
Here, $g^\#(A)=g(S)-g(\bar{A})$ is the dual function of $g$ and $\bar{A}$ denotes the complement of $A$.
Later, in Subsection~\ref{sec:reducible}, we introduce the concept of a {\em series-parallel decomposability}, and show how to find an optimal search if the problem is series-parallel decomposable. 

\paragraph{The game setting} We then restrict our attention to the case when $g$ is modular. This corresponds to the case of hiding a single object at one of the locations according to a probability distribution $\mathbf x \in [0,1]^S$. Thus, $g(A) = \mathbf x(A):=\sum_{j \in A} x_j$. We consider the finite  zero-sum game between a Searcher and a cost maximizing Hider, introduced in~\cite{FKR}.  
%For Searcher, this is equivalent to the problem of finding a randomized choice of searches that minimizes the expected search cost in the worst case, for $f$ non-decreasing and submodular. 
%This is equivalent to solving a finite zero-sum game between a Searcher and a cost maximizing Hider, also studied in~\cite{FKR}. 
A pure strategy for the Searcher is a permutation $\pi$ of $S$ and a pure strategy for the Hider is an element $j \in S$; the payoff is $f(S_j^\pi)$. We call this the {\em submodular search game}. Since the strategy sets are finite, the game has a value and optimal mixed strategies. However the size of the Searcher's strategy set complicates the problem of computing these optimal strategies. This is a {\em search game with an immobile Hider in discrete locations}, which is a type of game that has been well studied, see~\cite{AlpernGal,Gal80,Gal}. It is customary to study such games on graphs, and the cost is given by the time taken for the Searcher to reach the Hider's location from a giving starting point. The alternative approach in our paper is to ignore the graph and focus on the cost function. 

We analyze the submodular search game in Section~\ref{sec:game}, showing that every optimal Hider strategy lies in the {\em base polyhedron} of the scaled cost function $\frac 1 {f(S)} f$, and that any such strategy approximates an optimal strategy by a factor of $2$. We go on to give $\left( \frac 1{1-\kappa_f} \right)$-approximate strategies, where $\kappa_f$ is the total curvature of $f$ (defined precisely in Subsection~\ref{sec:curv}). Finally, we define a notion of series-parallel decomposability for the submodular search game and give a solution in this case.

We do not know the computational complexity of finding equilibrium strategies in the game, and we leave this as an open problem.

\subsection{Motivation, examples, and previous work} \label{sec:ex}
Here, we present a wide range of examples of submodular search in the context of search games and scheduling.  Several further applications are described in~\cite{Pisaruk92}.

\paragraph{Modular search}
In order to give some intuition for the models, we first consider the submodular search problem in the case that $f$ and $g$ are both modular (call this the {\em modular search problem}). In this case, for subsets $A \subset S$, we can write $g(A)=\mathbf x(A)$ and $f(A)=\mathbf c(A)$, for some vectors $\mathbf x, \mathbf c \in \mathbb R^S$. (We note that we are using the symbol $\subset$ to indicate non-strict set inclusion.) The modular search problem was considered by Bellman~\cite{Bellman} (Chapter III, Exercise 3, p.90), and the solution is easily shown to be that $S$ should be searched in non-increasing order of the indices $x_j/c_j$. Blackwell (reported in~\cite{Matula}) considered the more general problem in which each location has an {\em overlook probability}, that is the probability that when a location containing the Hider is inspected, the Hider is not found. An alternative route to the solution of this more complicated problem can be found using Gittins indices for multiarmed bandit processes~\cite{Gittins}. Two different solutions to a game-theoretic version of the modular search problem can be found in more recent work~\cite{AlpernLidbetter,Lidbetter}.

\paragraph{Smith's rule}
The modular search problem is equivalent to a single machine scheduling problem considered in~\cite{Smith}, in which $S$ is a set of jobs, $p_j$ is the {\em processing time} and $w_j$ is the {\em weight} of job $j$. For a given ordering $\pi$ of the jobs, the {\em completion time} $C_j$ of a job $j$ is the sum of its own processing time and the processing times of all jobs that precede it in $\pi$. The objective is to order the jobs so as to minimize the sum $\sum_j w_j C_j$ of the weighted completion times of the jobs. This problem is usually denoted $1|| \sum w_jC_j$, and by writing $p_j = c_j$ and $x_j=w_j$, it clearly fits into the framework of the modular search problem. The solution that the jobs should be completed in non-increasing order of the indices $w_j/p_j$ is known as {\em Smith's rule}. Theorem~\ref{thm1} of this paper is a generalization of Smith's rule, and says that any optimal search in the submodular search problem must begin with a subset $A$ that maximizes $g(A)/f(A)$.

Smith's rule has also appeared in a different guise in the field of reliability theory. In particular, \cite{Gluss} and \cite{Mitten} consider a least cost fault detection problem in which $n$ tests can be performed, each of which has a given cost and a given probability of detecting a fault. The object is to order the tests so as to minimize the expected cost of detecting the fault.

\paragraph{Search on graphs with a single object}
Now consider a generalization of the modular search problem that takes place on a graph on vertex set $S\cup\{r\}$. The Searcher is initially located at $r$ and the Hider is in $S$ according 
to the probability distribution $\mathbf x \in [0,1]^S$. Each edge of the graph has a cost. An {\em expanding search} of the graph is a sequence of edges, the first of which is incident to $r$, while each other is adjacent to some previously chosen edge. For a particular expanding search, the {\em search cost} of a vertex $j$ is the sum of the costs of each of the edges chosen up to and including the first edge that is incident to $j$, and the object is to find an expanding search that minimizes the expected search cost. This problem, which we call the {\em expanding search problem} was introduced in~\cite{AlpernLidbetter}. This paper also considered a game theoretic version of the problem, which we shall refer to the {\em expanding search game}, in which an adversary chooses a worst-case distribution $\mathbf x$. The expanding search paradigm is motivated by scenarios in which there is negligible cost to resume searching from some previously reached point of the search space, for example, when mining for coal. See~\cite{AlpernLidbetter} for further motivations of expanding search. 

Consider the expanding search problem on a tree with root $r$. For a subset $A$ of non-root vertices, let $f(A)$ be the sum of the costs of all the edges in the minimum cardinality subtree containing $A \cup \{r\}$ and let $g(A)=\mathbf x (A)$. Then $f$ is non-decreasing and submodular, $g$ is non-decreasing and modular, and the expanding search problem is equivalent to the submodular search problem for this $f$ and $g$; the expanding search game is equivalent to the submodular search game. In fact, the problem is series-parallel decomposable, so solutions of both follow immediately from this work.

\paragraph{Single machine scheduling with precedence constraints}
Both the expanding search problem and the expanding search game on a tree were solved in~\cite{AlpernLidbetter}, but in fact the expanding search problem is a special case of the single machine scheduling problem $1|prec|\sum w_j C_j$ (see~\cite{Lawler}, for example). This scheduling problem is a generalization of $1||\sum w_j C_j$ for which the ordering of the jobs $S$ must respect some precedence constraints given by a partial order $\prec$ on $S$, so that a job cannot be processed until all the jobs that precede it in the ordering have been completed. Sidney~\cite{Sidney} generalized Smith's rule, showing that an optimal schedule must begin with an {\em initial set} $A$ of jobs that maximizes the ratio $w(A)/p(A)$ (where $A$ is an initial set if for each job $j \in A$, all jobs preceding $j$ in the precedence ordering are also in $A$). Applying this principle repeatedly to the remaining jobs in $\bar{A}$, this gives rise to what became known as a {\em Sidney decomposition} $S=A_1\cup \ldots A_k$, where if $i <j$, all jobs in $A_i$ must be scheduled before all jobs in $A_j$. 

One usually depicts the partial order on the jobs by a Hasse diagram, which is a directed acyclic graph with vertex set $S$ and edges $(s,t)$ if $s\prec t$ and $s$ is an immediate predecessor of $t$. In the case that this graph is a tree, Sidney showed that his decomposition theorem could be used to find an optimal schedule (which was rediscovered in the context of the search problem in~\cite{AlpernLidbetter}). It was later shown that an optimal schedule can be found in polynomial time for generalized series-parallel graphs~\cite{Adolphson,Lawler78}, as we explain in Subsection~\ref{sec:scheduling}, and our result in Subsection~\ref{sec:reducible} for series-parallel decomposable problems generalizes this idea.

The connection to the submodular search problem was pointed out in~\cite{Pisaruk92}.
Define the cost $f(A)$ of a subset $A$ of jobs as the sum $p(\widetilde{A})$ of the processing times of all the jobs in the precedence closure $\widetilde{A}$ of $A$, and define $g(A) = \sum_{j \in A} w_j$. Then $f$ is non-decreasing and submodular and $g$ is non-decreasing and modular, and the problem $1|prec|\sum w_j C_j$ is equivalent to the submodular search problem for this $f$ and $g$. 

The problem $1|prec|\sum w_j C_j$ is well known to be $NP$-hard \cite{Garey,Lawler} (which implies that the submodular search problem is NP-hard) and there are many $2$-approximation algorithms \cite{Ambuhl09,Chekuri,Chudak,Hall,Margot,Pisaruk,Schulz}. Almost all $2$-approximations are consistent with a Sidney decomposition, as shown in~\cite{Correa}. In particular, any ordering of the jobs consistent with a Sidney decomposition approximates an optimal schedule by a factor of $2$. It is also known that there is no polynomial time approximation scheme for the problem unless NP-complete problems can be solved in randomized subexponential time \cite{Ambuhl07}. Furthermore, for any $\varepsilon >0$, there is no $(2-\varepsilon)$-approximation to the problem unless a slightly stronger version of the Unique Games Conjecture fails~\cite{BansalKhot}.

\paragraph{Scheduling with more general costs}
We may also consider the generalization of $1|prec|\sum w_j C_j$, denoted $1|prec|\sum w_j h(C_j)$, in which the object is to minimize the weighted sum of some monotonically increasing function $h$ of the completion times of the jobs. This problem was considered recently in~\cite{Schulz-Verschae}, where the authors find an expression in terms of $h$ for the approximation ratio for an arbitrary schedule that is consistent with a Sidney decomposition for the original problem $1|prec|\sum w_j C_j$. They also show that for any concave $h$, this approximate ratio is at most $2$. The concavity of the function $h$ corresponds to the machine benefiting from a learning effect or from a continuous upgrade of its resources. However, the authors also note that an optimal schedule may not follow a Sidney decomposition of this type. For $h$ concave, $1|prec|\sum w_j h(C_j)$ fits into the submodular search framework, taking $f(A)$ to be $h(p(\widetilde{A}))$ for a subset $A$ of jobs, and $g(A) = \sum_{j \in A} w_j$. Thus we find a different $2$-approximation from~\cite{Schulz-Verschae}, and a Sidney decomposition that is necessarily consistent with every optimal schedule. It should also be mentioned that~\cite{Schulz-Verschae} gives $(2+ \varepsilon)$-approximate algorithms for the more general problem of $1|prec|\sum h_j(C_j)$.

For arbitrary functions $h$ nothing is known about the problem $1|prec|\sum w_j h(C_j)$. Indeed, without any restrictions on $h$, it is a difficult to believe anything can be said in general. If there are no precedence constraints and $h(C_j)=C_j^{\beta}, \beta \geq 0$, this is the problem $1||\sum w_j C_j^{\beta}$, as studied in \cite{Bansal}, in which it is shown that the problem of minimizing total weighted completion time plus total energy requirement (see \cite{Durr,Megow}) can be reduced to $1||\sum w_j C_j^{\beta}, \beta \in (0,1)$. We discuss the problem $1||\sum w_j h(C_j)$ further in Subsection~\ref{sec:scheduling}, in which we bound the approximation ratio of our algorithm by a simple expression in terms of $h$.

\paragraph{Expanding search with multiple objects}
We now extend the expanding search problem to the setting where multiple objects are hidden. Consider a graph on vertex set $S\cup\{v\}$, with several  objects hidden inside $S$, so that for a subset $A\subset S$, objects are hidden at each of the vertices in $A$ with probability $q(A)$, where $\sum_{A \subset S} q(A) = 1$. The objective is to find an expanding search to minimize the expected time to find {\em all} the objects. A game theoretic version of this problem was introduced in~\cite{Lidbetter}, but nothing is known about the problem of minimizing the expected time to find multiple objects hidden according to a known distribution. When the graph is a tree, as before we can define $f(A)$ to be the sum of the costs of all the edges in the minimum cardinality subtree containing $A \cup \{r\}$, and this time define $g(A)$ to be $\sum_{B \subset A} q(B)$. Then $g$ is non-decreasing and supermodular. Thus, this is a submodular search problem, and therefore we obtain a $2$-approximation algorithm.

\paragraph{Scheduling with subset weights}
There is an analogous extension to the scheduling problem $1|prec|\sum w_j C_j$. Instead of giving a weight to each job, we give a weight $w_A \ge 0$ to each {\em subset} $A$ of jobs, and the object is to minimize the sum of the weighted completion times $\sum_{A \subset S} w_A C_A$ of the subsets of the jobs, where $C_A$ is the first time that all the jobs in $A$ have been completed. The motivation for this problem is the prospect that completing certain subsets of jobs could have additional utility. Denote this problem $1|prec|\sum w_A C_A$. If the number of non-zero weights $w_A$ is polynomial in $n$, then $1|prec|\sum w_A C_A$ can be reduced to $1|prec|\sum w_j C_j$. Indeed, given an instance of the former problem, for each subset $A$ with positive weight, we can create a dummy job with processing time $0$ and weight $w_A$ that is preceded by all jobs in $A$. The same holds for the further generalization $1|prec|\sum w_A h(C_A)$, where $h$ is a monotone increasing, concave function of the completion times.

If there are a superpolynomial number of non-zero weights, then the problem $1|prec|\sum w_A h(C_A)$ still fits into our framework: as before, take $f(A)=h(p(\widetilde{A}))$ and this time let $g(A)=\sum_{B \subset A} w_B$. Note that this requires the assumption that the values $g(A)$ are given by an oracle.

This problem can also be interpreted in the context of searching a directed acyclic graph (given by the Hasse diagram of the partial order). For each subset $A$ of edges, objects are hidden at each of the edges in $A$ with probability $w(A)$ (where $w(S)$ is normalized to be equal to 1). An edge can be searched only if all the edges preceding it in the precedence ordering have been searched, and the cost of searching an edge corresponding to a job $j$ is equal to the processing time $p_j$. The objective is to minimize the total expected cost of finding all the hidden objects.

The assumption of an oracle could be reasonable if, for example, $k$ objects are hidden uniformly at random on the edges of a directed acyclic graph, so that $w(A)=1/{n \choose k}$ if $|A|=k$ and $w(A)=0$ otherwise. In this case $g$ is given by $g(A) = {|A| \choose k}/{n \choose k}$. Equivalently, in the scheduling setting, equal utility could be derived from completing all subsets of $k$ jobs. 

\paragraph{The minimum linear ordering problem}
The {\em minimum linear ordering problem} was studied in~\cite{Iwata12}. The problem is to find a permutation $\pi$ to minimize the sum $\sum_{j=1}^n f(S_j^\pi)$ for a function $f:2^S \rightarrow \mathbb R^+$. For $f$ monotone increasing and submodular, an algorithm is given that finds a permutation that approximates an optimal one within a factor of $2-2/(n+1)$. This corresponds to the submodular search problem for $g(A)=|A|$ for all $A$. The approach of~\cite{Iwata12} is quite different to ours or to \cite{Pisaruk92}, and is based on rounding the convex programming relaxation based on the Lov\'{a}sz extension. This technique does not seem to extend easily to the more general setting.

\section{The Submodular Search Problem}
\label{sec:problem}

Let $S=\{1,\ldots,n\}$ be a finite set. A function ${f\colon 2^S\to \mathbb R}$ is {\em submodular} if
\[
f(A\cup B)+f(A\cap B)\leq f(A)+f(B)
\]
for all sets $A,B \subset S$. A function $g\colon 2^S\to \mathbb R$ is {\em supermodular} if and only if $\text{-}g$ is submodular.

We consider the submodular search problem, defined in Section~\ref{sec:intro}, with non-decreasing, non-negative submodular cost function $f$ and non-decreasing, non-negative supermodular weight function $g$. Although we often think of $g$ as defining probabilities, it is simpler not to make the assumption that $g(S)=1$. An optimal search remains optimal if we add a constant to $f$, and submodularity is preserved, so we may assume that $f(\emptyset)=0$ (in other words, $f$ is a polymatroid set function). Similarly, we assume that $g(\emptyset)=0$. %Rescaling by a positive constant also preserves the optimal search and submodularity, so we may assume that $f(S)=1$. 
Further we assume that $f(A)>0$ for all $A \neq \emptyset$, since it is clear that sets with zero cost must be searched first, and we assume that $g(A) <g(S)$ for all $A \neq S$, since any $A$ with $g(A)=g(S)$ would be searched first. We denote the expected cost of a search $\pi$ with respect to functions $f$ and $g$ by $c_{f,g}(\pi)$, though we shall usually suppress the subscripts.

%We call a vector $\mathbf{x} \in [0,1]^S$ whose coordinates are non-negative and add up to one a {\em Hider distribution} on $S$. For $A \subset S$, we write $\mathbf{x}(A)=\sum_{i\in A}\mathbf x(i)$. 

A key concept we will use in the paper is that of the {\em search density} (or simply {\em density}) of a set $A \subset S$, which is defined as the ratio of the probability the Hider is located in $A$ and the cost of searching $A$, if $A$ is searched first. Search density is a concept that often appears in the theory of search games (see \cite{AlpernHoward,AlpernLidbetter,AlpernLidbetter2}), and a general principle that arises is that it is best to search regions of higher density first. The corresponding inverse ratio of the processing time to the weight of jobs also arises naturally in scheduling theory, particularly in the well-known Smith's rule \cite{Smith} for minimizing the weighted completion time in single machine scheduling of jobs without precedence constraints. The rule says that the jobs should be executed in non-decreasing order of this ratio. Our $2$-approximation for the submodular search problem relies on a key result that there is an optimal search that begins with a maximum density subset of $S$. Sidney observed this to be the case for the scheduling problem $1|prec|\sum w_j C_j$ in~\cite{Sidney}.

The proof of our result and the resulting $2$-approximation is inspired by the proof of the analogous result in~\cite{Chekuri}, of which this is a generalization. We emphasize that the $2$-approximation found in~\cite{Chekuri} was obtained independently by~\cite{Margot}. We also note that the $2$-approximation result generalizes a similar result from~\cite{FKR}, which says that {\em any} search strategy is a $2$-approximation for the equilibrium search strategy in the submodular search game.

\begin{definition}
	The {\em search density} (or simply {\em density}) of a non-empty subset $A\subset S$ is defined as
	\[
	\rho(A)=\frac{g(A)}{f(A)}.
	\]
	We denote $\max\{\rho(A)\colon A\subset S\}$ by $\rho^*$ and if $\rho(A)=\rho^*$ then we
	say that $A$ has {\em maximum search density}, or simply {\em maximum density}.
	We put $\rho(\emptyset)=\rho^*$.
\end{definition}

Recall that $\mathcal F\subset 2^S$ is a {\em lattice} if $A,B\in\mathcal F$
implies that $A\cup B\in\mathcal F$ and $A\cap B\in\mathcal F$.
A non-empty $A\in\mathcal F$ is an {\em atom} if the only proper subset of $A$ in $\mathcal F$
is the empty set.
Atoms are disjoint and each element of $\mathcal F$ is a union of atoms.

If $f_1$ and $f_2$ are set functions on disjoint sets $S_1$ and $S_2$ then the {\em direct sum} $f_1 \oplus f_2$ of $f_1$ and $f_2$ over $S_1$ and $S_2$ is the set function on $S_1 \cup S_2$ defined by
\[
(f_1 \oplus f_2) (A) = f_1(S_1 \cap A) + f_2(S_2 \cap A).
\]
The restriction of $f$ to a subset $A$ is denoted by $f|_A$, and similarly for $g$.
% called the {\em reduction} and is denoted $f^A$, and similarly for $g$. 

In the proof of Lemma~\ref{lem:direct}, and later in the proof of Lemma~\ref{lem:sep}, we use the following observations: if $a,c\ge 0$ and $b,d>0$, then $\frac a b\leq \frac c d$ implies 
\begin{enumerate}[(i)]
	\item {}$\frac a b\leq \frac{a+c}{b+d}\leq \frac c d$.  Furthermore, if one of these three inequalities is an equality, then all the inequalities are equalities.\label{eq:frac-1}
	\item {}$(a-c)\frac{d}{c}\le (b-d)$. \label{eq:frac-2}
\end{enumerate}

\begin{lemma}\label{lem:direct}
	Let $\mathcal M$ be the family of subsets of maximum density and let $M$ be the union of all the atoms of $\mathcal M$.
	Then $\mathcal M$ is a lattice and the functions $f|_M$ and $g|_M$ are both direct sums over the atoms.
\end{lemma}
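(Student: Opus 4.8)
The plan is to prove the statement in three stages: first that $\mathcal M$ is a lattice, then that the atoms split $f$ and $g$ at the ``top'' (on $M$ itself), and finally to bootstrap this single top-level equality into a genuine direct-sum decomposition over \emph{all} subsets of $M$. Throughout I use that $g(C)\le \rho^* f(C)$ for every nonempty $C$ (the definition of $\rho^*$), the conventions $f(\emptyset)=g(\emptyset)=0$ and $\rho(\emptyset)=\rho^*$, and that $f(C)>0$ for $C\neq\emptyset$, so that $C\in\mathcal M$ is equivalent to $g(C)=\rho^* f(C)$.

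For the lattice claim, take $A,B\in\mathcal M$, so $g(A)=\rho^* f(A)$ and $g(B)=\rho^* f(B)$. Supermodularity of $g$ and submodularity of $f$ give $g(A\cup B)+g(A\cap B)\ge g(A)+g(B)=\rho^*\big(f(A)+f(B)\big)\ge \rho^*\big(f(A\cup B)+f(A\cap B)\big)$, while applying $g(C)\le\rho^* f(C)$ to $C=A\cup B$ and $C=A\cap B$ and adding gives the reverse inequality. Hence the whole chain is tight, and in particular $g(A\cup B)=\rho^* f(A\cup B)$ and $g(A\cap B)=\rho^* f(A\cap B)$, so both sets lie in $\mathcal M$; this is precisely the equality-propagation clause of observation (i). For the top-level splitting, let $A_1,\dots,A_k$ be the atoms; they are disjoint and $M=A_1\cup\cdots\cup A_k\in\mathcal M$, so $g(M)=\rho^* f(M)$ and $g(A_i)=\rho^* f(A_i)$. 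Superadditivity of $g$ and subadditivity of $f$ on this disjoint union yield $\rho^* f(M)=g(M)\ge\sum_i g(A_i)=\rho^*\sum_i f(A_i)\ge \rho^* f(M)$, and since $\rho^*>0$ this forces both $f(M)=\sum_i f(A_i)$ and $g(M)=\sum_i g(A_i)$.

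The crux is the final upgrade, which I would isolate as a self-contained lemma: if $\phi$ is submodular with $\phi(\emptyset)=0$ and $V=A_1\sqcup\cdots\sqcup A_k$ with $\phi(V)=\sum_i\phi(A_i)$, then $\phi(B)=\sum_i\phi(B\cap A_i)$ for every $B\subset V$. Applying this to $\phi=f|_M$ and to $\phi=-g|_M$ (legitimate since $-g$ is submodular, $-g(\emptyset)=0$, and $-g(M)=\sum_i -g(A_i)$ by the previous step) gives both direct-sum statements at once. To prove the lemma I reduce $k$ parts to two: subadditivity together with $\phi(V)=\sum_i\phi(A_i)$ forces $\phi(A_2\cup\cdots\cup A_k)=\sum_{i\ge2}\phi(A_i)$, so the restriction of $\phi$ to $A_2\cup\cdots\cup A_k$ again satisfies the hypotheses and induction applies. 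For two parts $V=A_1\sqcup A_2$, I rewrite the hypothesis as the marginal identity $\phi(A_1\mid A_2)=\phi(A_1\mid\emptyset)$, where $\phi(X\mid Y)=\phi(X\cup Y)-\phi(Y)$; the diminishing-marginals form of submodularity ($\phi(A_1\mid Y)$ nonincreasing in $Y$) then squeezes $\phi(A_1\mid B_2)=\phi(A_1)$ for every $B_2\subset A_2$. Writing $\phi(A_1\mid B_2)$ and $\phi(A_1\mid\emptyset)$ each as $\phi(B_1\mid\cdot)+\phi(A_1\setminus B_1\mid\cdot)$ and comparing the two terms separately, monotonicity of marginals forces $\phi(B_1\mid B_2)=\phi(B_1)$, that is $\phi(B_1\cup B_2)=\phi(B_1)+\phi(B_2)$, as required.

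I expect this last stage to be the main obstacle. The first two stages are short density manipulations and are essentially the mediant/equality-propagation observations (i) applied to $f$ and $g$; by contrast, converting a single scalar equality $\phi(V)=\sum_i\phi(A_i)$ into equalities for \emph{all} $2^{|M|}$ subsets is where the genuine submodular structure must be exploited, through the diminishing-returns property and the termwise equality argument. A secondary point to handle carefully is the degenerate case $\rho^*=0$ (forcing $g\equiv 0$), which should be excluded at the outset since the division by $\rho^*$ in the top-level step, and indeed the whole search problem, is otherwise vacuous.
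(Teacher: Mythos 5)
Your proof is correct, and its skeleton matches the paper's: the lattice claim is established by exactly the same tight chain of density inequalities, and the additivity of $f$ and $g$ across the atoms is forced in the same way. Where you genuinely depart from the paper is the final stage. The paper's proof stops at the top-level equalities $f(A)+f(B)=f(A\cup B)$ and $g(A)+g(B)=g(A\cup B)$ for pairs of atoms and simply asserts that $f|_{A\cup B}$ and $g|_{A\cup B}$ are direct sums; it is implicitly invoking the standard separator fact (which it only cites later, in the series-parallel subsection, via Cunningham's Proposition 5: a set $B$ is an $f$-separator as soon as $f(S)=f(B)+f(\bar{B})$), and it also glosses over the passage from pairs of atoms to all atoms. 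You instead isolate and prove this step as a self-contained lemma --- top-level additivity of a submodular $\phi$ with $\phi(\emptyset)=0$ over a disjoint partition propagates to every subset --- via the diminishing-marginals squeeze $\phi(A_1\mid A_2)\le\phi(A_1\mid B_2)\le\phi(A_1\mid\emptyset)$ followed by a termwise comparison, and your induction on the number of parts makes the multi-atom case explicit. Applying the lemma to $\phi=f|_M$ and $\phi=-g|_M$ is a nice touch that handles both functions at once and uses neither monotonicity nor non-negativity of $\phi$; your all-atoms-at-once derivation of $f(M)=\sum_i f(A_i)$ and $g(M)=\sum_i g(A_i)$ from superadditivity against subadditivity is likewise cleaner than the paper's pairwise statement. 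One small remark: your concern about $\rho^*=0$ is moot, since the paper's standing assumptions $g(\emptyset)=0$ and $g(A)<g(S)$ for $A\neq S$ force $g(S)>0$, hence $\rho^*\ge\rho(S)>0$.
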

\begin{proof}
If $A,B\in\mathcal M, A \neq B$ then $\rho^*=g(A)/f(A)=g(B)/f(B)$ and
\[
\rho^*=\frac{g(A)+g(B)}{f(A)+f(B)} \leq
\frac{g(A\cup B)+g(A\cap B)}{f(A\cup B)+f(A\cap B)},
\]
by the submodularity of $f$ and the supermodularity of $g$.
This inequality is in fact an equality, since $\rho(A\cup B)$ and $\rho(A\cap B)$ are both bounded above by $\rho^*$. It follows
that both $A\cup B$ and $A\cap B$ have maximum density.
If $A$ and $B$ are atoms then $A\cap B=\emptyset$, and the equality implies that
$f(A)+f(B)=f(A\cup B)$ and $g(A)+g(B)=g(A\cup B)$, so $f|_{A \cup B}$ and $g|_{A \cup B}$ are both direct sums over $A$ and $B$.
Therefore, $\mathcal M$ is a lattice and~$f|_M$ and $g|_M$ are direct sums over the atoms. 
\end{proof}

We now prove that optimal searches must start with a subset of maximum density, generalizing the analogous result for machine scheduling, as first shown in~\cite{Sidney}.

The proof of the theorem relies on the following lemma. For a subset $A$ of $S$ and $s \in A$, we write $d_s g(A)$ for $g(A)-g(A-\{s\})$, for convenience of presentation, so that, for instance, 
\[
c(\pi) = \sum_{j=1}^n d_j g(S_j^\pi) f(S_j^\pi) .
\]
\begin{lemma} \label{lem:perm}
	Let $f:2^S \rightarrow \mathbb R$ be non-decreasing and let $g:2^S \rightarrow \mathbb R$ be supermodular. If $\pi$ and $\pi'$ are two permutations of $S$, then
	\[
	c(\pi) \ge \sum_{j=1}^n d_j g(S_j^{\pi'}) f(S_j^{\pi}) .
	\]
\end{lemma}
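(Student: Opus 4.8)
The plan is to reduce the inequality to a one-dimensional comparison via summation by parts and to control the resulting partial sums using supermodularity. Without loss of generality I would relabel the ground set so that $\pi$ is the identity permutation; then $S_j^\pi = \{1,\dots,j\}$, which I abbreviate $T_j$ (with $T_0=\emptyset$), and the sequence $f(T_1)\le f(T_2)\le\cdots\le f(T_n)$ is non-decreasing since $f$ is non-decreasing. Writing $a_j=d_j g(T_j)$ and $b_j=d_j g(S_j^{\pi'})$, the claim becomes $\sum_{j=1}^n (a_j-b_j)f(T_j)\ge 0$, because the factor $f(S_j^\pi)=f(T_j)$ is common to both sides of the asserted inequality.

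First I would apply Abel summation to $\sum_{j=1}^n (a_j-b_j)f(T_j)$ with partial sums $F_k=\sum_{i=1}^k(a_i-b_i)$. Since $\sum_j a_j$ telescopes to $g(S)-g(\emptyset)=g(S)$, and $\sum_j b_j$ also telescopes to $g(S)$ when the sum is taken along the order $\pi'$, the total $F_n$ vanishes; hence the boundary term disappears and
\[
\sum_{j=1}^n (a_j-b_j)f(T_j) = \sum_{j=1}^{n-1} F_j\big( f(T_j)-f(T_{j+1}) \big).
\]
Because each factor $f(T_j)-f(T_{j+1})\le 0$ by monotonicity of $f$, it suffices to prove $F_j\le 0$ for every $j$, that is, $\sum_{i=1}^j a_i \le \sum_{i=1}^j b_i$.

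The left-hand side telescopes to $g(T_j)$, so the remaining task — and the crux of the argument — is the inequality $g(T_j)\le \sum_{i\in T_j} d_i g(S_i^{\pi'})$. To establish it I would build up $T_j$ by inserting its elements in the order in which they appear in $\pi'$: letting $t_1,\dots,t_j$ be this ordering and $U_k=\{t_1,\dots,t_k\}$, the value $g(T_j)$ telescopes as $\sum_{k=1}^j \big( g(U_k)-g(U_k-\{t_k\}) \big)$. The key observation is that $U_k\subseteq S_{t_k}^{\pi'}$, since $t_1,\dots,t_{k-1}$ all precede $t_k$ in $\pi'$, and that supermodularity of $g$ gives monotone marginals: if $A\subseteq B$ and $s\in A$, then $g(A)-g(A-\{s\})\le g(B)-g(B-\{s\})$. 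Applying this with $A=U_k$, $B=S_{t_k}^{\pi'}$ and $s=t_k$ bounds each term by $d_{t_k} g(S_{t_k}^{\pi'})$, and summing over $k$ yields $g(T_j)\le \sum_{i\in T_j} d_i g(S_i^{\pi'})$, as required.

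The only delicate point I anticipate is arranging the telescoping in the final step so that the prefix sets $U_k$ are genuinely nested inside the $\pi'$-prefixes $S_{t_k}^{\pi'}$; once the elements of $T_j$ are processed in their $\pi'$-order this inclusion is automatic, and the supermodular marginal inequality then does all the work. I note that the argument uses only monotonicity of $f$ and supermodularity of $g$, in agreement with the hypotheses of the lemma.
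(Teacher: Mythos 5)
Your proof is correct, and it takes a genuinely different route from the paper's. The paper argues by adjacent pairwise interchange: it defines a \emph{down-switch} (swapping two adjacent elements $i,h$ whose relative order in the current permutation agrees with $\pi$), computes that such a switch changes $\sum_j d_j g(S_j^{\sigma}) f(S_j^{\pi})$ by $\bigl(g(T\cup\{i,h\})-g(T\cup\{i\})-g(T\cup\{h\})+g(T)\bigr)\bigl(f(S_h^\pi)-f(S_i^\pi)\bigr)\ge 0$, and then invokes the standard (but there unproven) combinatorial fact that every $\pi'$ is reachable from $\pi$ by finitely many down-switches. You instead perform summation by parts in the $\pi$-order: the boundary term vanishes because both marginal sequences telescope to $g(S)-g(\emptyset)$, and the claim reduces to the prefix-sum inequality $g(T_j)-g(\emptyset)\le\sum_{i\in T_j} d_i g(S_i^{\pi'})$, which you prove by inserting the elements of $T_j$ in their $\pi'$-order and using the monotone-marginals consequence of supermodularity. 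That key inequality is exactly the statement that the marginal vector of $g$ along $\pi'$ lies in the supermodular base polyhedron, i.e.\ the supermodular counterpart of Edmonds' greedy fact (Lemma~\ref{lem:edmonds} in the paper), so your argument is self-contained, avoids the reachability claim, and isolates a reusable structural fact; the paper's exchange argument is more local and mirrors the classical scheduling-interchange intuition on which the rest of Section~\ref{sec:problem} is built. Two minor remarks: the lemma does not assume $g(\emptyset)=0$, but your argument survives because $g(\emptyset)$ cancels in $F_n$ and can be carried through the prefix inequality; and your sign bookkeeping (each $F_j\le 0$ multiplying $f(T_j)-f(T_{j+1})\le 0$) checks out.
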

\begin{proof}
We prove Lemma~\ref{lem:perm} using an adjacent pairwise interchange argument.	Suppose the element $i \in S$ appears before $h \in S$ in $\pi$, and suppose $\sigma$ and $\tau$ are any two permutations of $S$ that are identical except that in $\sigma$, the element $i$ appears immediately before $h$ and in $\tau$, the element $h$ appears immediately before $i$. In this case we say that $\tau$ can be obtained from $\sigma$ by a {\em down-switch}. Let $k$ be the immediate predecessor of $i$ in $\sigma$ and of $h$ in $\tau$, and let $T=S_k^\tau=S_k^\sigma$. Then
\begin{align}
&\sum_{j=1}^n d_j g(S_j^{\sigma}) f(S_j^{\pi}) - \sum_{j=1}^n d_j g(S_j^{\tau}) f(S_j^{\pi})   = (d_i g(S_i^\sigma) - d_i g(S_i^\tau) ) f(S_i^\pi) + ( d_h g(S_h^\sigma) - d_h g(S_h^\tau) ) f(S_h^\pi)\nonumber \\
&= ( g(T\cup\{i,h\})-g(T\cup\{i\}) - g(T\cup\{h\})+g(T) ) (f(S_h^\pi) - f(S_i^\pi)). \label{eq1}
\end{align}
By the monotonicity of $f$ and the supermodularity of $g$, the left-hand side of (\ref{eq1}) is non negative.
% because $S_i^\tau = S_i^\sigma \cup S_h^\tau$ and $S_k^\sigma = S_i^\sigma \cap S_h^\tau$.

It is easy to see that every permutation $\pi'$ can be derived from $\pi$ by performing a finite number of down-switches, and this proves the lemma. 
\end{proof}

We say that $A$ is an {\em initial segment} of a search strategy $\pi$ if $A=\{\pi(1),\ldots,\pi(|A|)\}$. 

\begin{theorem}\label{thm1}
	Let $M$ be the element of $\mathcal M$ of largest cardinality.
	Then any optimal search $\pi$ has initial segment $M$.
	Furthermore, if $A\in\mathcal M$, then
	there exists an optimal search $\pi'$ such that $A$ is an initial segment.
\end{theorem}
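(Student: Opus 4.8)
The plan is to prove both statements simultaneously by exploiting the exchange inequality of Lemma~\ref{lem:perm} together with the lattice structure of $\mathcal M$ from Lemma~\ref{lem:direct}. First I would set up the following basic comparison: for any search $\pi$ and any maximum-density set $A \in \mathcal M$, I want to show that moving $A$ to the front can only decrease (weakly) the cost. To do this, let $\pi'$ be a search having $A$ as an initial segment, arranged so that the elements of $A$ and the elements of $\bar{A}$ appear in the same relative order as in $\pi$. The key inequality from Lemma~\ref{lem:perm} gives $c(\pi) \ge \sum_{j=1}^n d_j g(S_j^{\pi'}) f(S_j^{\pi})$, and I then want to compare the right-hand side with $c(\pi') = \sum_{j=1}^n d_j g(S_j^{\pi'}) f(S_j^{\pi'})$. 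The difference between these two sums involves, for each $j$, the quantity $d_j g(S_j^{\pi'})\left(f(S_j^{\pi}) - f(S_j^{\pi'})\right)$.

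The heart of the argument is to show this difference is nonnegative, i.e.\ that $c(\pi) \ge c(\pi')$, with the inequality strict unless $A$ is already an initial segment of $\pi$. Here I would split the index set $S$ into the part inside $A$ and the part outside. For $j \in \bar{A}$ we have $S_j^{\pi'} \supseteq A$, so $f(S_j^{\pi'}) \le f(S_j^{\pi})$ by monotonicity, giving the right sign. For $j \in A$, the weights $d_j g(S_j^{\pi'})$ are large precisely because $A$ is searched first at maximum density, and the potential loss from the terms with $f(S_j^{\pi}) < f(S_j^{\pi'})$ must be offset by the gain from the $\bar{A}$ terms. This offsetting is exactly where I expect the maximum-density property of $A$ to enter, via the fractional inequalities (\ref{eq:frac-1}) and (\ref{eq:frac-2}) recorded before Lemma~\ref{lem:direct}: density $\rho(A)=\rho^*$ bounds the cost-saving traded against weight moved earlier. \textbf{This balancing step is the main obstacle}, since it requires a careful accounting of how much cost is saved in $\bar A$ against how much is ``paid'' inside $A$, and it is where the submodularity of $f$ and supermodularity of $g$ must be combined with the optimality of the density ratio.

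Having established $c(\pi) \ge c(\pi')$ whenever $A\in\mathcal M$, the second statement of the theorem follows directly: starting from any optimal $\pi$ and choosing $A\in\mathcal M$, the rearranged $\pi'$ satisfies $c(\pi')\le c(\pi)$, so by optimality $c(\pi')=c(\pi)$ and $\pi'$ is an optimal search with $A$ as an initial segment. For the first statement, let $M$ be the maximum-cardinality element of $\mathcal M$; by Lemma~\ref{lem:direct}, $\mathcal M$ is a lattice, so $M$ is its unique maximal element (the union of all atoms). I would argue that any optimal $\pi$ must have $M$ as an initial segment by showing the inequality $c(\pi)\ge c(\pi')$ is \emph{strict} unless $M$ already forms an initial segment: if some element of $\bar M$ precedes an element of $M$ in $\pi$, then at least one of the exchange steps strictly increases density and hence strictly decreases cost, using that $M$ is the largest maximum-density set and that sets strictly containing a density-maximizer while including outside elements have strictly smaller density. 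Thus no optimal search can leave any element of $M$ out of its initial segment, completing the proof.
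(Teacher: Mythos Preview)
Your overall architecture---apply Lemma~\ref{lem:perm} to replace the weights $d_jg(S_j^{\pi})$ by $d_jg(S_j^{\pi'})$, and then compare $\sum_j d_jg(S_j^{\pi'})f(S_j^{\pi})$ with $c(\pi')$ via a density-based balancing---is exactly the route the paper takes. However, there is a concrete error and a genuine gap.

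\textbf{Sign error.} For $j\in\bar A$ you write ``$S_j^{\pi'}\supseteq A$, so $f(S_j^{\pi'})\le f(S_j^{\pi})$ by monotonicity.'' In fact $S_j^{\pi'}=A\cup(S_j^{\pi}\cap\bar A)\supseteq S_j^{\pi}$, so monotonicity gives $f(S_j^{\pi'})\ge f(S_j^{\pi})$, the \emph{opposite} inequality. Symmetrically, for $j\in A$ one has $S_j^{\pi'}=S_j^{\pi}\cap A\subseteq S_j^{\pi}$, so these are the \emph{positive} terms. You have the roles of $A$ and $\bar A$ reversed throughout the balancing discussion. The balancing still has to be done, but the intuition you give (``weights $d_jg(S_j^{\pi'})$ are large for $j\in A$'') now points the wrong way, and you would need to re-think what the density hypothesis actually buys. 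The paper carries this out by grouping the ordering as alternating blocks $B_1,A_1,\ldots,B_k,A_k$ and using submodularity of $f$ together with inequality~\eqref{eq:frac-2} to convert each $f$-difference into a $g$-difference scaled by $1/\rho^*$; the resulting double sum telescopes to zero.

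\textbf{The first statement.} Your plan is to extract strictness in $c(\pi)>c(\pi')$ whenever some element of $\bar M$ precedes an element of $M$. You have not indicated where this strictness comes from, and it is not immediate: the balancing inequality can be tight even when $A$ is not an initial segment of $\pi$. The paper instead argues from the \emph{equality case}. Since $\pi$ is optimal, $c(\pi)=c(\pi')$, which forces every inequality used in the balancing to be an equality; in particular $\rho(A\cup B^j)=\rho^*$ for every $j$, so the full set $A\cup B$ of elements searched before $A$ is exhausted also lies in $\mathcal M$. Taking $A=M$ and using maximality of $M$ then forces $B=\emptyset$. This equality-case argument is what you are missing, and it is also what makes the ``balancing step'' you flag as the main obstacle do double duty.
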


\begin{proof}
Let $A$ be any subset of maximum search density.
Suppose that an optimal search $\pi$ starts by searching sets $B_1, A_1, B_2, A_2,\ldots,B_k,A_k \subset S$ in that order
before searching the rest of $S$, where $A_i\subset A$ and $B_i\subset \bar{A}$ for all $i$, the union $A_1 \cup \ldots \cup A_k$ is equal to $A$, and $B_1$ may be
the empty set. Let $A^j=A_1\cup \ldots \cup A_j$ and similarly ${\text{for }B^j}$.

Define a new search $\pi'$ which starts by searching $A_1,\ldots,A_k, B_1,\ldots, B_k$ before searching the
rest of $S$ in the same order. Within each $A_i$ and $B_i$ the new search follows the same order as $\pi$. For a subset $T$ of $S$, let $\Delta(T)$ be the difference between the terms corresponding to elements of $T$ in $c(\pi)$ and in $c(\pi')$. We will show that $\Delta \equiv \Delta(S) = 0$.

First consider any $s\in A_j$. The difference $\Delta(\{s\})$ is
\begin{align*}
\Delta(\{s\}) &= d_s g(S_s^\pi) f(S_s^\pi)  - d_s g(S_s^{\pi'}) f(S_s^{\pi'}) \\
& = d_s g(S_s^{\pi'})  (f(S_s^\pi) - f(S_s^{\pi'}))  + (d_s g(S_s^\pi) - d_s g(S_s^{\pi'})) f(S_s^\pi) \\
& \ge  d_s g(S_s^{\pi'}) (f(A \cup B^j) - f(A))+   (d_s g(S_s^\pi) - d_s g(S_s^{\pi'})) f(S_s^\pi),
\end{align*}
by the submodularity of $f$ and the monotonicity of $g$. Summing over all $s \in A_j$ gives
\begin{align}
\Delta(A_j) & \ge (g(A^{j})-g(A^{j-1})) (f(A \cup B^j)-f(A))  +  \sum_{s \in A_j}  (d_s g(S_s^\pi) - d_s g(S_s^{\pi'}))  f(S_s^\pi) \nonumber \\
& \ge \frac{1}{\rho^*} (g(A^{j})-g(A^{j-1})) (g(A \cup B^j)-g(A))  +  \sum_{s \in A_j}  (d_s g(S_s^\pi) - d_s g(S_s^{\pi'}))  f(S_s^\pi), \label{eq:Adiff}
\end{align}
The second inequality used the inequality \eqref{eq:frac-2} above, noting that $1/\rho^*=f(A)/g(A)$. Now consider any $t\in B_j$. The difference $\Delta(\{t\})$ is
\begin{align*}
\Delta(\{t\}) & =  d_t g(S_t^\pi) f(S_t^\pi)  - d_t g(S_t^{\pi'}) f(S_t^{\pi'})  \\
& =  d_t g(S_t^{\pi'}) (f(S_t^{\pi}) - f(S_t^{\pi'})) + (d_t g(S_t^\pi) - d_t g(S_t^{\pi'})) f(S_t^\pi)   \\
& \ge  d_t g(S_t^{\pi'}) (f(A^{j-1}) - f(A)) + (d_t g(S_t^\pi) - d_t g(S_t^{\pi'})) f(S_t^\pi) 
\end{align*}
by the submodularity of $f$. Summing over all $t \in B_j$ gives
\begin{align}
\Delta(B_j) & \ge (g(A\cup B^{j}) - g(A\cup B^{j-1})) (f(A^{j-1}) - f(A)) + \sum_{t \in B_j} (d_t g(S_t^\pi) - d_t g(S_t^{\pi'})) f(S_t^\pi) \nonumber \\
& \ge \frac{1}{\rho^*} (g(A\cup B^{j}) - g(A\cup B^{j-1})) (g(A^{j-1}) - g(A)) + \sum_{t \in B_j} (d_t g(S_t^\pi) - d_t g(S_t^{\pi'})) f(S_t^\pi), \label{eq:Bdiff}
\end{align}
again using \eqref{eq:frac-2}. We now sum these estimates on $\Delta(A_j)$ and $\Delta(B_j)$ over all $j$. Adding the two sums in the right-hand sides of~(\ref{eq:Adiff}) and~(\ref{eq:Bdiff}) and summing over $j$, we obtain
\[
\sum_{j=1}^n d_j g(S_j^\pi) f(S_j^\pi)  - \sum_{j=1}^n d_j g(S_j^{\pi'}) f(S_j^{\pi}),
\]
which is non-negative, by Lemma~\ref{lem:perm}. Hence $\Delta$, which is equal to the sum over $j$ of the right-hand sides of~(\ref{eq:Adiff}) and~(\ref{eq:Bdiff}), satisfies
\begin{align*}
\rho^* \Delta & \geq \sum_{j=1}^k \left( (g(A^{j})-g(A^{j-1})) (g(A \cup B^j)-g(A)) + (g(A\cup B^{j}) - g(A\cup B^{j-1})) (g(A^{j-1}) - g(A) \right) \\
& =  \sum_{j \le k} (g(A^j) - g(A^{j-1})) \sum_{i \le j} (g(A \cup B^i) - g(A \cup B^{i-1})) \\
& \quad + \sum_{j \le k} (g(A \cup B^j) - g(A \cup B^{j-1})) \sum_{i \ge j} (g(A^i) - g(A^{i-1})) \\
& =0,
\end{align*}
by swapping the order of summation of one of the double sums.

Therefore the ordering $\pi'$ is optimal. Hence, it must be true that $\Delta=0$ and all inequalities above are equalities.
It follows that $\rho(A\cup B^j)=\rho(A)=\rho^*$ for all $j$, and in particular $\rho(A\cup B)=\rho(A\cup B^k)=\rho^*$.
We have thus established that if $A$ has maximum search density, then it is a subset of an initial segment
$A\cup B$ of maximum density. Therefore, every optimal strategy $\pi$ searches $M$ first.
We have also established that there exists an optimal search that has
$A$ as an initial segment. 
\end{proof}

\subsection{A 2-approximation}
\label{sec:2approx}

Theorem~\ref{thm1} suggests an approach to constructing an optimal strategy, akin to a Sidney decomposition~\cite{Sidney} for machine scheduling. First find a non-empty subset ${A\subset S}$ of maximum density. By Theorem~\ref{thm1} there is an optimal strategy that begins with the elements of $A$. Now consider the subproblem of finding an optimal search of $\bar{A}$ with cost function $f_A$ defined for $B \subset \bar{A}$ by $f_A(B) =
f(A\cup B)-f(A)$ and weight function $g_A$ defined by $g_A(B)=g(A \cup B)-g(A)$. The function $f_A$ is called the {\em contraction} of $f$ by $A$ and is well known to be submodular \cite[page 45]{Fujishige}. Similarly, the contraction $g_A$ is supermodular. It is easy to see that a search of $S$ that begins with the elements of $A$ is optimal only if it defines an optimal search of $\bar{A}$ with cost function $f_A$ and weight function $g_A$. We summarize the observation below. 

\begin{lemma} \label{lem:decomp} Suppose there is an optimal search of $S$ with initial segment $A$. Then an optimal search of $S$ can be found by combining an optimal search of $A$ with respect to cost function $f|_A$ and weight function $g|_A$ with an optimal search of $\bar{A}$ with respect to cost function $f_A$ and weight function $g_A$.
\end{lemma}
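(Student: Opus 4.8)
The plan is to show that for any search $\pi$ of $S$ whose initial segment is $A$, the cost $c(\pi)$ splits into a part depending only on the order within $A$, a part depending only on the order within $\bar{A}$, and a term that does not depend on $\pi$ at all. Writing $\pi_A$ and $\pi_{\bar{A}}$ for the searches of $A$ and $\bar{A}$ induced by $\pi$, I would aim to establish the identity
\[
c_{f,g}(\pi) = c_{f|_A,\,g|_A}(\pi_A) + c_{f_A,\,g_A}(\pi_{\bar{A}}) + f(A)\bigl(g(S)-g(A)\bigr).
\]
Once this is in hand the lemma follows immediately: by hypothesis there is an optimal search with initial segment $A$, so the global minimum of $c_{f,g}$ is attained among searches with this initial segment; the right-hand side above is then minimized by independently minimizing its first two terms (the third being a fixed constant), which is precisely what combining an optimal search of the subproblem $(f|_A,g|_A)$ with an optimal search of the subproblem $(f_A,g_A)$ accomplishes.

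To prove the identity I would split the defining sum $c(\pi)=\sum_{j} d_j g(S_j^\pi)\, f(S_j^\pi)$ according to whether $j\in A$ or $j\in\bar{A}$. Because $A$ is an initial segment, for $j\in A$ every predecessor of $j$ in $\pi$ lies in $A$, so $S_j^\pi=S_j^{\pi_A}\subset A$, whence $f(S_j^\pi)=f|_A(S_j^{\pi_A})$ and $d_j g(S_j^\pi)=d_j(g|_A)(S_j^{\pi_A})$; the sum over $j\in A$ therefore equals $c_{f|_A,g|_A}(\pi_A)$. For $j\in\bar{A}$, write $B_j=S_j^\pi\cap\bar{A}=S_j^{\pi_{\bar{A}}}$, so that $S_j^\pi=A\cup B_j$. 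By the definitions of the contractions, $f(A\cup B_j)=f_A(B_j)+f(A)$, and since $j\in\bar{A}$,
\[
d_j g(A\cup B_j)=g(A\cup B_j)-g\bigl(A\cup(B_j-\{j\})\bigr)=d_j g_A(B_j).
\]

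Substituting these into the sum over $j\in\bar{A}$ gives
\[
\sum_{j\in\bar{A}} d_j g_A(B_j)\bigl(f_A(B_j)+f(A)\bigr)
= c_{f_A,g_A}(\pi_{\bar{A}}) + f(A)\sum_{j\in\bar{A}} d_j g_A(B_j).
\]
The remaining step is to recognize the last sum as a constant. As $j$ runs through $\bar{A}$ in the order of $\pi$, the sets $B_j$ grow by one element at a time from $\emptyset$ up to $\bar{A}$, so $\sum_{j\in\bar{A}} d_j g_A(B_j)$ telescopes to $g_A(\bar{A})-g_A(\emptyset)=g(S)-g(A)$, independently of $\pi$. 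Assembling the three pieces yields the claimed identity.

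I do not expect a genuine obstacle: the argument is a bookkeeping identity together with the given hypothesis, and the only point requiring care is the telescoping constant $f(A)\bigl(g(S)-g(A)\bigr)$, whose appearance is exactly what decouples the two subproblems. The single structural fact being invoked — that $f_A$ and $g_A$ are again submodular and supermodular, so that the subproblem on $\bar{A}$ is a well-posed instance of the submodular search problem — has already been recorded in the surrounding text, so the induced searches $\pi_A$ and $\pi_{\bar{A}}$ are legitimate searches for legitimate subproblems and may be chosen optimally and recombined.
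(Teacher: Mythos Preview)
Your proof is correct and follows the same approach as the paper. The paper actually states Lemma~\ref{lem:decomp} as an observation without a formal proof, but the decomposition identity you derive is precisely the one the paper uses later in the proof of Theorem~\ref{thm2} (where the constant term is written as $g_A(\bar{A})f(A)$, which equals your $f(A)(g(S)-g(A))$).
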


We now repeat the process on $\bar{A}$ with cost function $f_A$ and weight function $g_A$, finding a subset of maximum density, and so on. The result is a partition of $S$ into subsets $A=A_1,A_2,\ldots,A_k$ such that there exists an optimal search strategy that respects the ordering of those subsets. This is a generalization of the notion of a Sidney decomposition for optimal scheduling \cite{Sidney}. If each subset $A_j$ is chosen to be the maximal set of maximum density, then Theorem~\ref{thm1} implies that the resulting decomposition must be respected by any optimal search strategy.

We show that in fact, {\em any} search that respects the ordering of such a decomposition $A_1,\ldots,A_k$ described above approximates an optimal search by a factor of $2$, generalizing the analogous result for scheduling that can be found in~\cite{Chekuri} and~\cite{Margot}. We first show that if $S$ itself has maximum density then any search approximates an optimal search by a factor of 2.

\begin{lemma}\label{lem1}
	Suppose that $S$ has maximum search density.
	Then every search strategy has an expected cost in between
	$g(S)f(S)/2$ and $g(S)f(S)$.
\end{lemma}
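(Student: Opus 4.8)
The plan is to prove both bounds by exploiting the hypothesis that $S$ itself has maximum density, i.e. $\rho(S) = \rho^* = g(S)/f(S)$. Since every subset $A \subset S$ satisfies $\rho(A) \le \rho^*$, we have the pointwise inequality $g(A) \le \rho^* f(A) = (g(S)/f(S)) f(A)$, and this should be the workhorse for the upper bound. For the lower bound I expect to need a complementary inequality, obtained by applying the density bound to complements, i.e. bounding $g(S) - g(S_j^\pi)$ from below.

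\textbf{The upper bound.}
First I would write the expected cost of an arbitrary search $\pi$ as $c(\pi) = \sum_{j=1}^n d_j g(S_j^\pi) f(S_j^\pi)$. Since $f$ is non-decreasing, $f(S_j^\pi) \le f(S) $ for every $j$, so
\[
c(\pi) \le f(S) \sum_{j=1}^n d_j g(S_j^\pi) = f(S)\,(g(S) - g(\emptyset)) = f(S) g(S),
\]
the sum telescoping because $\sum_j d_j g(S_j^\pi) = g(S)$. This gives the upper bound with essentially no use of submodularity beyond monotonicity.

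\textbf{The lower bound.}
This is where the maximum-density hypothesis on $S$ must do real work, and I expect it to be the main obstacle. The idea is to bound $f(S_j^\pi)$ from below rather than above. Using the density bound $g(S_j^\pi) \le \rho^* f(S_j^\pi)$ gives $f(S_j^\pi) \ge g(S_j^\pi)/\rho^* = (f(S)/g(S)) g(S_j^\pi)$, hence
\[
c(\pi) \ge \frac{f(S)}{g(S)} \sum_{j=1}^n d_j g(S_j^\pi)\, g(S_j^\pi).
\]
It then remains to show $\sum_{j=1}^n d_j g(S_j^\pi)\, g(S_j^\pi) \ge g(S)^2/2$. Writing $a_j = g(S_j^\pi)$, the increments $d_j g(S_j^\pi) = a_j - a_{j-1}$ are nonnegative (by monotonicity of $g$) and partition the interval $[0, g(S)]$, so the sum $\sum_j (a_j - a_{j-1}) a_j$ is a right-endpoint Riemann-type sum that overestimates $\int_0^{g(S)} t\,dt = g(S)^2/2$; formally, $(a_j - a_{j-1}) a_j \ge \int_{a_{j-1}}^{a_j} t\, dt$ since $a_j \ge t$ on that interval. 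Summing telescopes to the claimed bound, and dividing through by $\rho^* = g(S)/f(S)$ yields $c(\pi) \ge f(S)g(S)/2$. The delicate point to verify carefully is that these elementary estimates are exactly the ones licensed by monotonicity and the density hypothesis, and that equality in the lower bound corresponds to a near-uniform spreading of the increments, consistent with the factor-$2$ gap between the two bounds.
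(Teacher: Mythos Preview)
Your proof is correct and follows essentially the same route as the paper's: both bound $f(S_j^\pi)$ below by $g(S_j^\pi)/\rho^*$ and then reduce to showing $\sum_j (g(S_j)-g(S_{j-1}))\,g(S_j)\ge g(S)^2/2$. The only cosmetic difference is that the paper establishes this last inequality via the algebraic identity $2\sum_j x_j g(S_j) = (\sum_j x_j)^2 + \sum_j x_j^2$ (with $x_j=g(S_j)-g(S_{j-1})$), whereas you phrase it as a right-endpoint Riemann sum dominating $\int_0^{g(S)} t\,dt$; these are the same observation.
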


\begin{proof}
%We normalize the cost function to $f(S)=1$, as usual so that $\rho^*=1$.
Let $\pi$ be any search, and without loss of generality suppose $\pi(j)=j$, so that $S_j=S_j^{\pi} = \{1,\ldots,j\}$. Write $x_j = g(S_j)-g(S_{j-1}),j=1\ldots,n$, and note that $g(S_j) = \sum_{i \le j} x_i$. Then the expected cost of $\pi$ is
\begin{align*}
c(\pi)&=\sum_j x_j f(S_j) \\
&\geq  \frac 1 {\rho^*} \sum_j x_j g(S_j) \mbox{ (since $\rho(S_j) \le \rho^*$)} \\
&= \frac 1 {\rho^*}  \left( \sum_j x_j ^2+\sum_{i<j} x_i x_j \right) \\
&=\frac 1 {2\rho^* } \left( \left(\sum_j x_j \right)^2+ \sum_j x_j^2 \right) \\
&=\frac 1 {2\rho^* }  \left( g(S) ^2+  \sum_j x_j^2 \right)\\
&\ge\frac {g(S) f(S)}{2}, \nonumber
\end{align*}
since $g(S)/f(S) = \rho^*$. The cost of {\em any} search is at most $g(S)f(S)$.
It follows that if $S$ has maximum search density then $g(S)f(S)/2 \leq c(\pi)\leq g(S)f(S)$. 
\end{proof}

Our $2$-approximation relies on being able to find a maximum density subset efficiently. The problem of maximizing the ratio of a supermodular function to a positive submodular function was considered in~\cite[Section 6]{Iwata}, where it was shown that the problem can be solved in strongly polynomial time. For completeness, we present below a simple version of this algorithm which exploits the fact that $f$ is non-decreasing.
\medskip 

\begin{enumerate}
	\item Set $\lambda = \rho(S)$.
	\item Maximize the supermodular function $g(X)-\lambda f(X)$ over subsets $X \subset S$. Let $A$ be a maximizer.
	\item If $\rho(A)=\rho(S)=\lambda $, return $S$ as a maximum density subset.
	\item Otherwise, set $S=A$ and go back to Step 1.
\end{enumerate}

\medskip
Before we prove the correctness of this algorithm, first note that the total number of iterations is at most $n$, and each iteration involves a minimization of submodular functions, which can be performed in strongly polynomial time, using Schrijver's algorithm,
or the Iwata-Fleischer-Fujishige algorithm~\cite{Fujishige,Schrijver}.

To prove the algorithm does indeed return a maximum density subset, first note that if $A$ maximizes $g(X)-\lambda f(X)$ and $\rho(A)=\rho(S)=\lambda$, then for any set $B \subset S$, we have $g(B) - \lambda f(B) \le g(A) - \lambda f(A)=0$, so $\rho(B) \le \lambda = \rho(S)$, so $S$ has maximum density.

So we just need to show that if $A$ is a maximizer of $g(X)- \lambda f(X)$ then $A$ contains a maximum density subset. Indeed, suppose $B$ has maximum density. Then by the supermodularity of $g- \lambda f$ and the fact that $A$ maximizes $g-\lambda f$, it follows that $g(B) - \lambda f(B) \le g(A \cap B) - \lambda f(A \cap B)$. This can be rewritten as
\[
(\rho(B) - \lambda)f(B) \le (\rho(A \cap B) - \lambda)f(A \cap B).
\]
Since $B$ has maximum density and $f$ is non-decreasing, it follows that $\rho(A \cap B) = \rho(B)$ and $f(A \cap B) = f(B)$, so $A \cap B$ is non-empty and has maximum density.

\begin{theorem}\label{thm2}
	Suppose that the submodular function $f$ and the supermodular function $g$ are given by value  oracles.
	Then there is a $2$-approximation for an optimal search strategy to the submodular search problem that can be computed in strongly polynomial time.
\end{theorem}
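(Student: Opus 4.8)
The plan is to combine the strongly polynomial max-density subroutine described just above the theorem with a Sidney-style decomposition and the sandwich bound of Lemma~\ref{lem1}. First I would build a decomposition $S = A_1 \cup A_2 \cup \cdots \cup A_k$ greedily: let $A_1$ be a maximum density subset of $(f,g)$ returned by the algorithm, then recurse on $\bar{A_1}$ with the contracted cost function $f_{A_1}$ and weight function $g_{A_1}$ to obtain $A_2$, and so on. Each round makes one call to the subroutine, and since every block is non-empty there are at most $n$ rounds; all contractions are evaluated through the value oracles, so the whole decomposition is computed in strongly polynomial time. By the second part of Theorem~\ref{thm1} together with Lemma~\ref{lem:decomp}, applied recursively, there is an optimal search that respects this decomposition, and by construction each $A_i$ has maximum density in its own contracted problem $(\hat f_i,\hat g_i)$, where for $B\subset A_i$ and $P_{i-1}=A_1\cup\cdots\cup A_{i-1}$ we set $\hat f_i(B)=f(P_{i-1}\cup B)-f(P_{i-1})$ and $\hat g_i(B)=g(P_{i-1}\cup B)-g(P_{i-1})$.

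The claim to prove is that any search $\pi$ respecting the block ordering is a $2$-approximation, and the key step is a decomposition of the cost. For $s\in A_i$ we have $S_s^\pi=P_{i-1}\cup T_s$ with $T_s\subset A_i$, whence $d_s g(S_s^\pi)=d_s \hat g_i(T_s)$ and $f(S_s^\pi)=f(P_{i-1})+\hat f_i(T_s)$. Summing the contribution of block $A_i$ and using the telescoping identity $\sum_{s\in A_i} d_s \hat g_i(T_s)=\hat g_i(A_i)$, I would obtain
\[
c(\pi) = \sum_{i=1}^k f(P_{i-1})\, G_i \;+\; \sum_{i=1}^k \hat c_i(\pi_i),
\]
where $G_i=g(P_i)-g(P_{i-1})$, $F_i=f(P_i)-f(P_{i-1})$, and $\hat c_i(\pi_i)$ is the cost of the within-block order $\pi_i$ in the contracted problem. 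The first sum depends only on the partition and not on the within-block orders; the second splits into independent per-block contributions.

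The heart of the argument is that each block $A_i$ is of maximum density in its contracted problem $(\hat f_i,\hat g_i)$, so Lemma~\ref{lem1} applies with ground set $A_i$ and gives $G_i F_i/2 \le \hat c_i(\pi_i) \le G_i F_i$ for every within-block order. Hence $c(\pi)\le \sum_i f(P_{i-1}) G_i + \sum_i G_i F_i$. Since the optimal search also respects the decomposition, its cost has exactly the same form, with each within-block contribution bounded below by $G_i F_i/2$, so $\mathrm{OPT}\ge \sum_i f(P_{i-1}) G_i + \tfrac12\sum_i G_i F_i$. Comparing the two expressions, the common non-negative term $\sum_i f(P_{i-1}) G_i$ gives $c(\pi)\le 2\,\mathrm{OPT}$, the desired guarantee.

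I expect the main obstacle to be the bookkeeping that establishes the cost decomposition and, in particular, the observation that the between-block term $\sum_i f(P_{i-1}) G_i$ is invariant under reorderings within blocks and is therefore shared by $\pi$ and by the optimal search. Once this invariance is in place, the blockwise sandwich from Lemma~\ref{lem1} together with the non-negativity of the shared term yields the factor of $2$ essentially for free; the running-time claim is then immediate from the at-most-$n$ calls to the strongly polynomial max-density subroutine.
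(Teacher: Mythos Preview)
Your proposal is correct and follows essentially the same approach as the paper: both build a Sidney-type decomposition by repeatedly extracting a maximum-density block, invoke Theorem~\ref{thm1} and Lemma~\ref{lem:decomp} to know that some optimal search respects this decomposition, and then apply the sandwich bound of Lemma~\ref{lem1} blockwise together with the shared cross term to obtain the factor~$2$. The only difference is presentational: the paper compresses your explicit cost decomposition $c(\pi)=\sum_i f(P_{i-1})G_i+\sum_i \hat c_i(\pi_i)$ into a two-part induction (on $A$, where Lemma~\ref{lem1} applies immediately since $A$ has maximum density in the restricted problem, and on $\bar A$), whereas you unroll this induction into the full chain $A_1,\dots,A_k$.
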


\begin{proof}
As discussed above, a subset $A \subset S$ of maximum density can be computed in strongly polynomial time. 
If $A$ is the entire set, then any search is a $2$-approximation
by Lemma~\ref{lem1}. If $A$ is a proper subset,
then there exists an optimal search with initial segment $A$. 

Let $\pi^*$ be an optimal search of $S$, let $\pi_A$ be an optimal search of $A$ with respect to functions $f|_A$ and $g|_A$, and let $\pi_{\overline{A}}$ be an optimal search of $\bar{A}$ with respect to functions $f_A$ and $g_A$. Then
\[
c_{f,g}(\pi^*) = c_{f|_A,g|_A}(\pi_A) + g_A(\bar{A})f(A) + c_{f_A,g_A}(\pi_{\overline{A}}).
\]
By induction, if we have a $2$-approximation for $\pi_A$ and $\pi_{\overline{A}}$ then we have one for $\pi^*$. 
\end{proof}

The algorithm produces a partition $A_1,\ldots,A_k$ of $S$ such that each $A_i$ has maximum density
in the complement of $\cup_{j<i}A_j$. The resulting search strategy $\pi$ orders each $A_i$
in an undetermined manner. The search strategy $\pi$ is fully determined only if each $A_i$ is a singleton.
This only happens in very specific cases, for instance, if $f$ and $g$ are modular.
The maximum density first algorithm then produces an optimal search strategy. As mentioned in the Introduction, this corresponds to Smith's rule~\cite{Smith} for optimal scheduling or the result of Bellman~\cite{Bellman} in the context of search theory.

We note that Pisaruk's algorithm~\cite{Pisaruk92,Pisaruk} also produces a Sidney decomposition of $S$.
%e emphasize that our $2$-approximation algorithm is not radically different from the algorithm of Pisaruk~\cite{Pisaruk92}; indeed Pisaruk's algorithm also produces a Sidney decomposition of $S$.
The important addition we have made here is Theorem~\ref{thm1}, which implies that {\em every} optimal search follows a Sidney decomposition. Theorem~\ref{thm1} is also important in the next subsection where we give a more refined expression for the approximation ratio of our algorithm.

\subsection{Improved approximation for functions of low curvature}
\label{sec:curv}

Define the {\em dual} $g^\#:2^S \rightarrow \mathbb{R}$ of the set function $g$ by $g^\#(A)=g(S)-g(\overline{A})$ (see~\cite[page 36]{Fujishige}). It is easy to see that $(g^{\#})^{\#}=g$. Also, $g$ is non-decreasing and submodular with $g(\emptyset)=0$ if and only if $g^{\#}$ is non-decreasing and supermodular with $g(\emptyset)=0$.

Observe that for a search $\pi$, we have $c_{f,g}(\pi)=  c_{g^{\#},f^{\#}} (\pi')$, where $\pi'$ is the reverse of $\pi$. Indeed,
\begin{align*} 
c_{f,g}(\pi) &= \sum_{j=1}^n f(S_j^\pi) (g(S_j^\pi) - g(S_j^\pi - j)) \\
&= \sum_{j=1}^n (f^\#(S ) - f^\#(\overline{S_j^\pi })) (g^\#(\overline{S_j^\pi -j}) - g^\#(\overline{S_j^\pi} )) \\
& = \sum_{j=1}^n g^\#(S_j^{\pi'}) (f^\#(S_j^{\pi'}) - f^\#(S_j^{\pi'} - j)) \\
& = c_{f^\#,g^\#}(\pi').
\end{align*}
It follows that $\min_\pi c_{f,g}(\pi) = \min_\pi c_{g^{\#},f^{\#}} (\pi)$, and we will use this {\em duality} later.

We now show that the algorithm of Subsection~\ref{sec:2approx} performs better when the cost function $f$ and the dual function $g^\#$ have {\em total curvature} less than 1.
%For $B\subset S$ the {\em marginal function} $f_B$ is defined as $f_B(A)=f(A\cup B)-f(B)$. 
The {\em total curvature} $\kappa$ of a set function $f$ on $S$ such that $f(\emptyset)=0$ and $f(s)>0$ for all $s \in S$ is 
\[
\kappa=1-\min_{s\in S}\frac{f_{S - s}(s)}{f(s)}=\max_{s \in S} \frac{f(s)+f(S-s)-f(S)}{f(s)}.
\]  
This was first defined in \cite{Conforti}; see also \cite{Vondrak}.
% for applications of curvature to optimization problems. 
When $f$ is monotone non-decreasing, $\kappa \le 1$. When it is submodular, $\kappa \geq 0$ (with equality if and only if $f$ is modular), and the value $f_X(s)$ decreases as $X\subset S$ increases, but it always exceeds $(1-\kappa)f(s)$.
Note that if $\kappa_{g^\#}$ is the total curvature of $g^\#$ for a supermodular function $g$ with $g(\emptyset)=0$, then $\kappa_{g^\#}={(g(S)-g(s)-g(S-s))}/{(g(S)-g(S-s))}$.

\begin{lemma} \label{lem:curv}
	Suppose $S$ has maximum search density, $f$ has total curvature $\kappa_f$ and $g^\#$ has total curvature $\kappa_{g}$. Then for all $A \subset S$, the density $\rho(A)$ satisfies
	\[
	(1-\kappa_f)(1-\kappa_{g^\#}) \rho^* \le \rho(A) \le \rho^*.
	\]
\end{lemma}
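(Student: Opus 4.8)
The upper bound $\rho(A)\le \rho^*$ is immediate from the definition of $\rho^*$, so all the work is in the lower bound. The plan is to sandwich $g(A)$ and $f(A)$ between the set functions and their ``top'' singleton marginals using the two curvature hypotheses, and then to compare those marginals against $\rho^*$. For convenience write $\delta^g_i=g(S)-g(S-i)$ and $\delta^f_i=f(S)-f(S-i)$ for the top marginals, and $g(i)=g(\{i\})$, $f(i)=f(\{i\})$ for the bottom ones.

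First I would record two consequences of curvature combined with sub/superadditivity. Since $g$ is supermodular with $g(\emptyset)=0$ it is superadditive, so $g(A)\ge \sum_{i\in A} g(i)$; and the formula for $\kappa_{g^\#}$ quoted just before the lemma rearranges to $g(i)\ge (1-\kappa_{g^\#})\,\delta^g_i$ for every $i$, where the assumption $g(A)<g(S)$ for $A\ne S$ guarantees $\delta^g_i>0$ so the formula is meaningful. Hence
\[
g(A)\ \ge\ (1-\kappa_{g^\#})\sum_{i\in A}\delta^g_i .
\]
Dually, since $f$ is submodular, monotone and $f(\emptyset)=0$ it is subadditive, so $f(A)\le \sum_{i\in A}f(i)$; and the definition of $\kappa_f$ gives $f(i)\le \tfrac{1}{1-\kappa_f}\,\delta^f_i$ (the interesting case being $\kappa_f<1$; if $\kappa_f=1$ the claimed bound reads $0\le\rho(A)$ and there is nothing to prove). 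Hence
\[
f(A)\ \le\ \frac{1}{1-\kappa_f}\sum_{i\in A}\delta^f_i .
\]

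The key step is to show that every top marginal ratio dominates $\rho^*$, i.e.\ $\delta^g_i\ge \rho^*\,\delta^f_i$ for all $i$. Here I would use the hypothesis that $S$ has maximum density, so that $\rho(S)=\rho^*$, together with $\rho(S-i)\le\rho^*$. Writing $g(S)=g(S-i)+\delta^g_i$ and $f(S)=f(S-i)+\delta^f_i$, the inequality $\rho(S-i)\le \rho(S)$ is exactly $\tfrac{g(S-i)}{f(S-i)}\le\tfrac{g(S-i)+\delta^g_i}{f(S-i)+\delta^f_i}$; by the mediant inequality \eqref{eq:frac-1} this forces $\rho^*=\rho(S)\le \delta^g_i/\delta^f_i$, that is $\delta^g_i\ge\rho^*\delta^f_i$ (this also holds trivially when $\delta^f_i=0$, since $\delta^g_i\ge0$). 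Summing over $i\in A$ gives $\sum_{i\in A}\delta^g_i\ge \rho^*\sum_{i\in A}\delta^f_i$, and combining the three displayed bounds yields
\[
\rho(A)=\frac{g(A)}{f(A)}\ \ge\ (1-\kappa_f)(1-\kappa_{g^\#})\,\frac{\sum_{i\in A}\delta^g_i}{\sum_{i\in A}\delta^f_i}\ \ge\ (1-\kappa_f)(1-\kappa_{g^\#})\,\rho^*.
\]

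The routine parts are the two curvature/additivity estimates. The one genuinely load-bearing step is the marginal comparison $\delta^g_i\ge\rho^*\delta^f_i$, which is where the maximum-density hypothesis on $S$ enters and where the mediant inequality \eqref{eq:frac-1} does the real work; this is the step I expect to be the main obstacle to get cleanly. The only remaining points needing care are the degeneracies $\kappa_f=1$ and $\delta^f_i=0$, both of which are dispatched as noted above.
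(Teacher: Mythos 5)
Your proof is correct, and it reaches the bound by a somewhat different route than the paper, even though both arguments run on the same three ingredients: subadditivity of $f$, superadditivity of $g$, and the singleton curvature estimates $f_{S-s}(s)\ge(1-\kappa_f)f(s)$ and $g(s)\ge(1-\kappa_{g^\#})\left(g(S)-g(S-s)\right)$. The difference is where the maximum-density hypothesis enters. The paper applies it exactly once, to the complement of $A$: from $\rho(\bar{A})\le\rho^*=\rho(S)$ it gets $\rho^*\left(f(S)-f(\bar{A})\right)\le g(S)-g(\bar{A})$, and it connects $f(A)$ and $g(A)$ to these complement marginals via the telescoping estimate $f(S)-f(\bar{A})\ge\sum_{s\in A}f_{S-s}(s)$ together with its dual for $g$. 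You instead apply maximum density separately to each set $S-i$ with $i\in A$: cross-multiplying $\rho(S-i)\le\rho(S)$ gives $\rho(S-i)\le\left(g(S)-g(S-i)\right)/\left(f(S)-f(S-i)\right)$, and the mediant inequality then places $\rho(S)$ below that same ratio, which is exactly your marginal domination $g(S)-g(S-i)\ge\rho^*\left(f(S)-f(S-i)\right)$; summing over $i\in A$ then finishes without ever looking at $\bar{A}$. What each buys: your version avoids the telescoping-over-an-ordering step and the duality appeal for the $g$-estimate, so it is arguably more elementary; the paper's version keeps the factor $1-\kappa_f$ multiplied through rather than dividing by it, so it needs no special branch for $\kappa_f=1$ and no discussion of vanishing marginals. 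Your handling of both degeneracies is correct anyway --- and in fact, once $\kappa_f<1$, the standing assumption $f(i)>0$ forces $f(S)-f(S-i)\ge(1-\kappa_f)f(i)>0$, so the $f(S)-f(S-i)=0$ case you flag can only occur when the bound is vacuous. If you wanted to eliminate the case split entirely, write your $f$-estimate in the product form $(1-\kappa_f)f(A)\le\sum_{i\in A}\left(f(S)-f(S-i)\right)$ and carry that through to the final display.
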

\begin{proof}
The second inequality follows from the fact that $S$ has maximum density of $\rho^*$. To prove the first inequality, first observe that
\[
\frac{f(S)-f(\bar{A})}{f(A)} \ge \frac{\sum_{s \in A} f_{S-s}(s)}{\sum_{s \in A} f(s)} \ge \frac{\sum_{s \in A}(1-\kappa_f)f(s)}{\sum_{s \in A} f(s)} = 1-\kappa_f.
\]
Similarly, $g^\#(S) - g^\#(\bar{A}) \ge (1-\kappa_{g^\#}) g^\#(A)$, or, equivalently, $g(A) \ge (1-\kappa_{g^\#})(g(S)-g(\bar{A}))$.
Hence
\[
(1-\kappa_f)(1-\kappa_{g^\#})\rho^* f(A) \le (1-\kappa_{g^\#})\rho^*(f(S)-f(\bar{A})) \le (1-\kappa_{g^\#})(g(S)-g(\bar{A})) \le g(A),
\]
where the second inequality comes from the fact that $S$ has maximum density. The first inequality of the lemma follows.
\end{proof}
We can now revisit the proof of Lemma~\ref{lem1} by deriving a tighter upper bound on the expected cost of any search strategy $\pi$ and a tighter lower bound on an optimal search strategy $\pi^*$ when $S$ has maximum density. 
This is based on Edmonds's well-known greedy algorithm (see e.g.~\cite[Section 3.2]{Fujishige}). The submodular base polyhedron is defined as:
\begin{equation}\label{eq:submod-polytope}
{\mathbb{B}(f) = \{ \mathbf x \in \mathbb{R}^S: \mathbf x(A) \le f(A) \text{ for all }A \subset S, \mathbf x(S)=f(S)\}}
\end{equation}
\begin{lemma}[Edmonds] For a submodular function $f$, an optimal solution to 
	$\max \mathbf w ^T \mathbf x$  subject to $\mathbf{x}\in \mathbb{B}(f)$ is given by
	\[
	x_j=f(S_j^\pi)-f(S_j^{\pi}-j),\quad \forall j=1,\ldots,n,
	\]
	where $\pi$ is a permutation that orders $S$ in non-increasing order of $w_j$.\label{lem:edmonds}
\end{lemma}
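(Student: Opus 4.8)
The plan is to relabel the ground set so that the permutation $\pi$ in the statement is the identity, i.e. $w_1 \ge w_2 \ge \cdots \ge w_n$, and write $S_j = S_j^\pi = \{1,\ldots,j\}$ with $S_0 = \emptyset$. The candidate vector is then $\mathbf x$ with $x_j = f(S_j) - f(S_{j-1})$. I would prove the lemma in two independent parts: first that $\mathbf x$ is feasible, i.e.\ $\mathbf x \in \mathbb{B}(f)$, and second that it is optimal, the latter via LP duality by exhibiting a dual feasible solution whose objective value matches $\mathbf w^T \mathbf x$. I expect the feasibility step to carry the real content, since optimality then reduces to guessing and checking a dual certificate.

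For feasibility, the equality constraint is immediate: $\mathbf x(S) = \sum_{j=1}^n (f(S_j) - f(S_{j-1})) = f(S) - f(\emptyset) = f(S)$ by telescoping, using the standing assumption $f(\emptyset) = 0$. For the inequalities $\mathbf x(A) \le f(A)$, I would fix $A \subset S$, list its elements $j_1 < j_2 < \cdots < j_k$ in the order of $\pi$, and apply the diminishing-returns form of submodularity: since $A \cap S_{j_i - 1} \subset S_{j_i - 1}$ and $j_i \notin S_{j_i - 1}$, we get
\[
x_{j_i} = f(S_{j_i}) - f(S_{j_i}-j_i) \le f(A \cap S_{j_i}) - f(A \cap S_{j_i - 1}).
\]
Because $A$ contains no element strictly between $j_{i-1}$ and $j_i$, we have $A \cap S_{j_i - 1} = A \cap S_{j_{i-1}}$, so summing over $i$ telescopes to $f(A) - f(\emptyset) = f(A)$, giving $\mathbf x(A) \le f(A)$.

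For optimality I would write the primal as maximizing $\sum_j w_j x_j$ subject to $\mathbf x(A) \le f(A)$ for all $\emptyset \neq A \subsetneq S$ and $\mathbf x(S) = f(S)$, with dual variables $y_A \ge 0$ for proper $A$ and $y_S$ free, dual constraints $\sum_{A \ni j} y_A = w_j$, and dual objective $\sum_A y_A f(A)$. The certificate guided by the greedy order is
\[
y_{S_j} = w_j - w_{j+1} \ (1 \le j \le n-1), \quad y_{S_n} = w_n, \quad y_A = 0 \text{ otherwise}.
\]
Nonnegativity of $y_{S_j}$ for $j<n$ follows from $w_j \ge w_{j+1}$, and $y_{S_n}$ is unconstrained in sign; the covering constraint holds because the chain sets containing $j$ are exactly $S_j, \ldots, S_n$, so $\sum_{k \ge j} y_{S_k} = (w_j - w_n) + w_n = w_j$. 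Finally, an Abel summation shows the two objective values coincide: $\sum_j w_j(f(S_j)-f(S_{j-1})) = w_n f(S_n) + \sum_{j=1}^{n-1}(w_j - w_{j+1}) f(S_j)$ using $f(S_0)=0$, which is precisely $\sum_k y_{S_k} f(S_k)$. Weak duality then forces both solutions to be optimal, completing the proof. The one place demanding care is the feasibility inequality, where the correct pairing of marginals $x_{j_i}$ with the increments $f(A \cap S_{j_i}) - f(A \cap S_{j_i-1})$ must be set up so the telescoping is exact; everything after that is bookkeeping.
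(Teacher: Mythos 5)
Your proof is correct and complete: feasibility of the greedy vector via the diminishing-returns inequality applied along the chain $S_0 \subset S_1 \subset \cdots \subset S_n$, and optimality via the chain dual certificate $y_{S_j} = w_j - w_{j+1}$, $y_{S_n} = w_n$, with matching objectives by Abel summation. Note that the paper does not prove this lemma at all---it states it as Edmonds' classical result and cites Fujishige (Section 3.2) and Schrijver for the proof---so there is no in-paper argument to compare against; your write-up is exactly the standard LP-duality proof found in those references, and every step (including the two places needing care: $A \cap S_{j_i-1} = A \cap S_{j_{i-1}}$ for the telescoping, and the equality dual constraints arising because $\mathbf x$ is unrestricted in sign) checks out.
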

\iffalse
First we need the following lemma, which generalizes a well-known greedy algorithm of Edmonds~\cite{Edmonds} (see~\cite[Theorem 44.3]{Schrijver} for the standard proof), who solved the problem of maximizing $\mathbf w ^T \mathbf x$ over vectors $\mathbf x$ in the submodular polyhedron ${P(f) = \{ \mathbf x \in 2^S: \mathbf x(A) \le f(A) \text{ for all }A \subset S\}}$ of a submodular function $f$. This corresponds to part (ii) of Lemma~\ref{lem:epsilon} when $g$ is modular, since any optimal $\mathbf x$ must be an extreme point of the sudmodular polyhedron, so that $x_j = f(S_j^\pi) - f(S_j^\pi - j)$ for some permutation $\pi$.
\fi
\begin{lemma} \label{lem:epsilon}
	Suppose $f:2^S \rightarrow \mathbb R^+$ is submodular and $g \rightarrow \mathbb R^+$ is supermodular, and let $\kappa_f$ and $\kappa_{g^\#}$ be the total curvature of $f$ and $g^{\#}$, respectively. Define a function $\varepsilon=\varepsilon_{f,g}$ on permutations $\pi$ of $S$ by
	\[
	\varepsilon(\pi) =  \sum_{j=1}^n (f(S_j^\pi) - f(S_j^\pi - j)) (g(S_j^\pi) - g(S_j^\pi - j)) = \sum_{j=1}^n d_jf(S_j^\pi) d_j g(S_j ^\pi).
	\]
	Let $\pi_1$ be a permutation that orders the elements in non-increasing order of $f(j)$ and let $\pi_2$ be a permutation of $S$ that orders the elements in non-increasing order of $g^{\#}(j)$. Then
	\begin{enumerate}[(i)]
		\item $ (1-\kappa_f) \varepsilon(\pi_1) \le \min_\pi \varepsilon(\pi)$ and
		\item $(1-\kappa_{g^\#}) \varepsilon(\pi_2) \le  \min_\pi \varepsilon(\pi)$.
	\end{enumerate}
\end{lemma}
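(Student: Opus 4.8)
The plan is to read $\varepsilon(\pi)$ as a bilinear form in the two increment vectors that a permutation determines. For a permutation $\pi$ write $x^\pi_j=d_jf(S_j^\pi)$ and $y^\pi_j=d_jg(S_j^\pi)$, so that $\varepsilon(\pi)=\sum_j x^\pi_j y^\pi_j$. By Lemma~\ref{lem:edmonds} the vector $x^\pi$ is the vertex of the base polyhedron $\mathbb{B}(f)$ produced by the greedy order $\pi$, and the $y^\pi_j$ are the marginals of the supermodular $g$, which are nonnegative since $g$ is non-decreasing. I would prove (i) directly in three steps and then obtain (ii) from (i) by the duality identity developed just before the lemma.

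For (i) the three steps are as follows. First, the curvature bound gives $x^\pi_j=f_{S_j^\pi-j}(j)\ge (1-\kappa_f)f(j)$ for every $\pi$ and $j$, while $y^\pi_j\ge 0$; hence $\varepsilon(\pi)\ge (1-\kappa_f)\sum_j f(j)\,y^\pi_j$ for all $\pi$, and so $\min_\pi\varepsilon(\pi)\ge (1-\kappa_f)\min_\pi\sum_j f(j)\,y^\pi_j$. Second, I minimise the linear functional $\pi\mapsto\sum_j f(j)\,y^\pi_j$ by adjacent transpositions: swapping two neighbours $i$ (before) and $h$ changes the value by $(f(h)-f(i))\delta$, where $\delta=g(T\cup\{i,h\})-g(T\cup i)-g(T\cup h)+g(T)\ge 0$ is the supermodular second difference over the already-scheduled set $T$. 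This is $\le 0$ exactly when $f(i)\ge f(h)$, so the minimum is attained at $\pi_1$, which orders the elements in non-increasing $f(j)$. Third, since $x^{\pi_1}_j=f_{S_j^{\pi_1}-j}(j)\le f(j)$ by submodularity and $y^{\pi_1}_j\ge 0$, we have $\sum_j f(j)\,y^{\pi_1}_j\ge\sum_j x^{\pi_1}_j y^{\pi_1}_j=\varepsilon(\pi_1)$. Chaining the three steps yields $(1-\kappa_f)\varepsilon(\pi_1)\le\min_\pi\varepsilon(\pi)$.

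For (ii) I would reduce to (i) by duality. A computation identical to the one preceding the lemma shows $d_jf(S_j^\pi)=d_jf^\#(S_j^{\pi'})$ and $d_jg(S_j^\pi)=d_jg^\#(S_j^{\pi'})$, where $\pi'$ is the reverse of $\pi$; hence $\varepsilon_{f,g}(\pi)=\varepsilon_{f^\#,g^\#}(\pi')$, and since reversal is a bijection on permutations, $\min_\pi\varepsilon_{f,g}(\pi)=\min_\pi\varepsilon_{f^\#,g^\#}(\pi)$. Now $g^\#$ is submodular with total curvature $\kappa_{g^\#}$ and $f^\#$ is supermodular, and $\varepsilon$ is symmetric in its two set-function arguments, so applying part (i) to the pair $(g^\#,f^\#)$ produces exactly the factor $(1-\kappa_{g^\#})$ together with the ordering that sorts $g^\#(j)$; pulling this back through the reversal $\pi\mapsto\pi'$ gives the inequality for the corresponding ordering of the original problem.

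The main obstacle is fixing the direction of the extremal ordering in the second step and then tracking it correctly through the reversal in (ii). The sign of the swap difference is governed entirely by the second difference $\delta$, and it is this sign that forces the \emph{non-increasing} order for the supermodular $g$ in step two of (i). Under the duality reversal the corresponding order for the submodular $g^\#$ is the \emph{reverse} of its greedy order, so the permutation that actually carries the bound in (ii) is the one sorting $g^\#(j)$ in non-decreasing order; care is needed to ensure the ordering named in the statement is the one surviving the reversal. Once the extremal ordering is pinned down, the remaining ingredients—the two curvature inequalities $(1-\kappa)f(j)\le f_X(j)\le f(j)$ and the nonnegativity of the marginals—are routine.
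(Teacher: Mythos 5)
Your proof of part (i) is correct and is essentially the paper's own argument: you linearize $\varepsilon$ by replacing the $f$-marginals with the singleton values $f(j)$, show that the non-increasing-$f(j)$ order minimizes the linearized functional $\sum_j f(j)\,d_jg(S_j^\pi)$ (the paper gets this from Lemma~\ref{lem:edmonds}, applied to the supermodular $g$; your adjacent-transposition argument proves the same greedy fact by hand), and then sandwich using $(1-\kappa_f)f(j)\le d_jf(A)\le f(j)$. One small convention slip: with ``after swap minus before swap'' the value changes by $(f(i)-f(h))\delta$, not $(f(h)-f(i))\delta$; your conclusion that the non-increasing order minimizes is nevertheless the correct one, since $\delta\ge 0$. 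Your reduction of (ii) to (i) via the reversal identity $\varepsilon_{f,g}(\pi)=\varepsilon_{f^\#,g^\#}(\pi')$ is also exactly the paper's suggested route.

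The caveat in your final paragraph is not merely a point requiring care: you have identified a genuine error in the statement of the lemma, and you resolved it the right way. The bound in (ii) holds for the permutation sorting $g^\#(j)$ in \emph{non-decreasing} order --- which is what both your duality argument and the direct argument deliver --- and it can fail for the non-increasing order $\pi_2$ as stated. Counterexample: let $S=\{1,2\}$, $f(1)=f(2)=1$, $f(\{1,2\})=1.2$ (submodular, non-decreasing), and let $g$ be modular with $g(1)=0.9$, $g(2)=0.1$. Then $g^\#=g$, so $\kappa_{g^\#}=0$ and $\pi_2$ searches $1$ then $2$, but
\[
\varepsilon(\pi_2)=1\cdot 0.9+0.2\cdot 0.1=0.92,
\qquad
\varepsilon((2,1))=1\cdot 0.1+0.2\cdot 0.9=0.28,
\]
so $(1-\kappa_{g^\#})\,\varepsilon(\pi_2)=0.92>0.28=\min_\pi\varepsilon(\pi)$. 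The intuition matches your second step: a submodular $f$ has its large marginals early, so the heavy weights $g^\#(j)$ must be scheduled late; equivalently, Lemma~\ref{lem:edmonds} says the non-increasing order \emph{maximizes}, rather than minimizes, a linear functional over the vertices of $\mathbb{B}(f)$. The error is harmless downstream: the proof of Theorem~\ref{thm:curvature} only needs some computable permutation $\pi$ with $(1-\kappa_{g^\#})\varepsilon(\pi)\le\min_\sigma\varepsilon(\sigma)$, and the non-decreasing order supplies it.
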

\begin{proof}
For part (i), let us fix the cost function $w_j=f(j)$ for $j=1,\ldots,n$.
Then, Lemma~\ref{lem:edmonds} implies that $\pi_1$ minimizes the function\[
\varepsilon'(\pi) = \sum_{j=1}^n w_j (g(S_j^\pi) - g(S_j^\pi - j)).
\]
\iffalse
This is equivalent to the result of~\cite{Edmonds}, but we include a simple proof for completeness. Suppose in some permutation $\pi_0$ that minimizes $\varepsilon'(\pi)$, the element $i$ immediately precedes the element $j$ and $x_i \le x_j$. Let $\pi_0'$ be the permutation that is identical to $\pi_0$ except that the positions of $i$ and $j$ are interchanged. Let $A = S_i^{\pi_0'} - i$.
\begin{align*}
\varepsilon'(\pi_0') - \varepsilon'(\pi_0) &= (x_j(g(A \cup j) - g(A)) + x_i(g(A \cup \{i,j\}) - g(A \cup j)))\\
& \quad - (x_i(g(A \cup i) - g(A)) + x_j(g(A \cup \{i,j\}) - g(A \cup i))) \\
& = (x_j - x_i)(g(A \cup i) + g(A \cup j) - g(A \cup \{i,j\}) - g(A))\\
& \le 0,
\end{align*}
by the supermodularity of $g$. By successively interchanging a finite number of pairs of elements $i$ and $j$ such that $x_i \le x_j$, we can transform the permutation $\pi_0$ into $\pi_1$, and it follows that $\varepsilon'(\pi_1) \le \varepsilon'(\pi_0)$, so that $\pi_1$ also minimizes $\varepsilon'(\pi)$.\fi
It follows that for any permutation $\pi$,
\[
(1-\kappa_f) \varepsilon(\pi_1) \le (1-\kappa_f) \varepsilon'(\pi_1) \le (1-\kappa_f)\varepsilon'(\pi) \le  \varepsilon(\pi),
\]
third inequality follows from the definition of $\kappa_f$.

Part (ii) follows using the similar argument,
%This establishes part (i) of the lemma, and the second part follows by a similar argument,
or by observing that $\varepsilon_{f,g}(\pi) = \varepsilon_{g^{\#},f^{\#}}(\pi')$, where $\pi'$ is the reverse permutation of $\pi$ (so that $\pi'(i) = \pi(n+1-i)$). Indeed,
\begin{align*} 
\varepsilon_{f,g}(\pi) &= \sum_{j=1}^n (f(S_j^\pi) - f(S_j^\pi - j)) (g(S_j^\pi) - g(S_j^\pi - j)) \\
&= \sum_{j=1}^n (f^\#(\overline{S_j^\pi -j} ) - f^\#(\overline{S_j^\pi })) (g^\#(\overline{S_j^\pi -j}) - g^\#(\overline{S_j^\pi} )) \\
& = \sum_{j=1}^n (f^\#(S_j^{\pi'}) - f^\#(S_j^{\pi'} - j)) (g^\#(S_j^{\pi'}) - g^\#(S_j^{\pi'} - j)) \\
& = \varepsilon_{f^\#,g^\#}(\pi').
\end{align*}
\end{proof}

\begin{theorem} \label{thm:curvature}
	Suppose that the submodular function $f$ and the supermodular function $g$ are given by a value oracle, $f$ has total curvature $\kappa_f <1$, and $g^\#$ has total curvature $\kappa_{g^\#}<1$.
	Then there is a search strategy that can be computed in strongly polynomial time and approximates an optimal search strategy for the submodular search problem
	with approximation ratio $ \frac{ 2}{1+ \delta}$, where 
	\[
	\delta = \min \left \{ \theta, \frac{2 \theta \max \{1- \kappa_f, 1- \kappa_{g^\#}\}}{1+\theta} \right \},
	\]
	and $\theta = (1-\kappa_f)(1-\kappa_{g^\#})$. If either $f$ or $g$ is modular then the approximation ratio is $\frac{2}{1+\theta}$.
\end{theorem}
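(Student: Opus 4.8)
The plan is to reduce to the case where $S$ itself has maximum density and then sharpen both inequalities used in the proof of Lemma~\ref{lem1}.

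\emph{Reduction.} Running the maximum-density-first algorithm and applying Theorem~\ref{thm1} and Lemma~\ref{lem:decomp}, I obtain the decomposition $S=A_1\cup\cdots\cup A_k$ into maximal maximum-density blocks; every optimal search and the search produced by the algorithm both respect this block order. The cost of any order respecting the decomposition splits into a sum of within-block costs plus cross terms of the form $g_{A^{i-1}}(\cdot)\,f(\cdot)$ that depend only on the partition and not on the intra-block order. These cross terms are common to the chosen search and to the optimum, and adding equal non-negative quantities to the numerator and denominator of a ratio that is at least $1$ can only decrease it, so the overall approximation ratio is at most the largest within-block ratio. Restricting a submodular function to a subset, or contracting it by a subset, only increases each marginal $f_X(s)$ relative to $f(s)$, hence neither operation increases the total curvature; the same holds for $g^\#$ by duality. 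Thus every block has curvature parameters at least as favorable as the global ones, so block-wise $\theta=(1-\kappa_f)(1-\kappa_{g^\#})$, $\mu=\max\{1-\kappa_f,1-\kappa_{g^\#}\}$, and $\delta$ only grow. It therefore suffices to prove the bound when $S$ has maximum density $\rho^*$, which I assume henceforth, writing $F=f(S)$, $G=g(S)$.

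\emph{Lower bound on the optimum.} Set $\hat c(\pi)=\sum_j d_jf(S_j^\pi)\,g(S_j^\pi)$, the cost with the roles of $f$ and $g$ interchanged. A direct computation gives the identity
\[
c(\pi)+\hat c(\pi)=FG+\varepsilon(\pi),
\]
and summation by parts gives
\[
c(\pi)-\hat c(\pi)=\sum_j f(S_{j-1}^\pi)\,f(S_j^\pi)\,\big(\rho(S_j^\pi)-\rho(S_{j-1}^\pi)\big).
\]
Using $\rho(\emptyset)=\rho(S)=\rho^*$, the monotonicity of $f(S_{j-1}^\pi)f(S_j^\pi)$ in $j$, and $\rho(\cdot)\le\rho^*$, the second expression telescopes to a non-negative quantity, so $c(\pi)\ge\hat c(\pi)$ for \emph{every} $\pi$. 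Combined with the identity this yields $c(\pi^*)\ge\tfrac12(FG+\varepsilon(\pi^*))\ge\tfrac12(FG+\varepsilon^*)$, where $\varepsilon^*=\min_\pi\varepsilon(\pi)$; here $\varepsilon^*>0$ because $\kappa_f,\kappa_{g^\#}<1$ forces every marginal of $f$ and of $g$ to be positive.

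\emph{Upper bound on the chosen search, and the main obstacle.} I take $\pi_{\mathrm{ch}}$ to be whichever of $\pi_1,\pi_2$ in Lemma~\ref{lem:epsilon} has the smaller value of $\varepsilon$, so that $\varepsilon(\pi_{\mathrm{ch}})\le\varepsilon^*/\mu$. To bound $c(\pi_{\mathrm{ch}})$ I use the identity again, now controlling $c-\hat c$ from above. Lemma~\ref{lem:curv} gives $\theta\rho^*\le\rho(A)\le\rho^*$ for all $A$, and the marginal density $m_j=d_jg(S_j^\pi)/d_jf(S_j^\pi)$ satisfies $\rho(\{j\})\le m_j\le\rho(\{j\})/\theta$ by the submodularity of $f$, the supermodularity of $g$, and the curvature definitions, whence $\theta\rho^*\le m_j\le\rho^*/\theta$. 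Feeding these into the summation-by-parts formula, together with $\sum_j d_jf(S_j^\pi)^2\le\varepsilon(\pi)/(\theta\rho^*)$ (which follows from $m_j\ge\theta\rho^*$), I aim for an upper bound of the shape $c(\pi_{\mathrm{ch}})\le\tfrac1{1+\theta}\big(FG+\varepsilon(\pi_{\mathrm{ch}})\big)$, which is tight when $f$ or $g$ is modular. The hard part is getting this constant exactly right: because the $m_j$ can \emph{exceed} $\rho^*$, applying Lemma~\ref{lem:curv} term by term only delivers $\hat c\ge\theta^2 c$ and hence the weaker coefficient $\tfrac1{1+\theta^2}$; recovering the sharp $\tfrac1{1+\theta}$ requires exploiting the telescoping structure of $\sum_j f(S_{j-1}^\pi)f(S_j^\pi)(\rho(S_j^\pi)-\rho(S_{j-1}^\pi))$ rather than estimating each summand separately.

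\emph{Combining.} Writing $t=\varepsilon^*/(FG)\in(0,1]$ and substituting the lower bound $\tfrac12(FG+\varepsilon^*)$ together with the upper bound and $\varepsilon(\pi_{\mathrm{ch}})\le\varepsilon^*/\mu$, the ratio becomes a single-variable function of $t$ whose maximum over $[0,1]$ is attained at an endpoint, the interior being excluded by the sign of the derivative (governed by the comparison of $\mu$ with $\tfrac1{2-\theta}$). The two endpoints produce the two candidate values whose worse one is recorded by $\delta=\min\{\theta,\,2\theta\mu/(1+\theta)\}$. When $f$ or $g$ is modular, $\mu=1$ and Lemma~\ref{lem:epsilon} makes the $\varepsilon$-minimization exact, so both endpoints collapse to $\tfrac2{1+\theta}$, giving the final sentence of the theorem. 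The two points demanding genuine care are thus the sharp constant in the upper bound and the bookkeeping that shows the endpoint optimization reproduces exactly the stated $\delta$.
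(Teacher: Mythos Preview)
Your reduction and lower bound are fine: the identity $c(\pi)+\hat c(\pi)=FG+\varepsilon(\pi)$ together with the Abel-summation argument that $c\ge\hat c$ yields exactly the lower bound $c(\pi^*)\ge\tfrac12(FG+\varepsilon(\pi^*))$ that the paper obtains via its area interpretation.

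The genuine gap is the upper bound, and it propagates into the combining step. You aim for $c(\pi)\le\tfrac1{1+\theta}(FG+\varepsilon(\pi))$ and admit you cannot get the constant $\tfrac1{1+\theta}$; but even this target is too weak, and your telescoping formula for $c-\hat c$ will not deliver it. The paper's bound is strictly sharper: after normalizing $F=G=1$ it proves
\[
c(\pi)\ \le\ \min\Bigl\{\tfrac1{1+\theta}+\tfrac12\,\varepsilon(\pi),\ 1\Bigr\},
\]
with coefficient $\tfrac12$ (not $\tfrac1{1+\theta}\ge\tfrac12$) on $\varepsilon$. The key idea you are missing is to apply Lemma~\ref{lem:curv} \emph{twice}: once to $(f,g)$, giving $f(A)\le g(A)/\theta$, and once to the dual pair $(g^\#,f^\#)$, giving $f(A)\le 1-\theta+\theta\,g(A)$. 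The paper then rewrites $c(\pi)-\tfrac12\varepsilon(\pi)$ as the trapezoidal area under the polygonal curve through the points $(g(S_j^\pi),f(S_j^\pi))$ and bounds that curve above by the lower envelope of the two lines, whose area on $[0,1]$ is exactly $\tfrac1{1+\theta}$. Your approach only uses $\rho(A)\ge\theta\rho^*$, never the dual inequality, which is why the sharp constant escapes you.

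The combining step is also off. Substituting $\varepsilon(\pi_{\mathrm{ch}})\le\varepsilon^*/\mu$ into your target upper bound and dividing by $\tfrac12(1+\varepsilon^*)$ gives, at the endpoint $t=1$, the ratio $(\mu+1)/(\mu(1+\theta))$, which is \emph{not} $2/(1+2\mu\theta/(1+\theta))$; your asserted endpoint values do not follow from your bounds. The paper instead keeps $\varepsilon(\pi)$ of the \emph{chosen} search as the free variable and uses Lemma~\ref{lem:epsilon} inside the lower bound, $c(\pi^*)\ge\tfrac12+\tfrac12\mu\,\varepsilon(\pi)$. With the paper's upper bound, the ratio is then a function of $\varepsilon(\pi)$ maximized either at $0$ or at the kink $\varepsilon(\pi)=2\theta/(1+\theta)$, and those two values are precisely $\tfrac2{1+\theta}$ and $\tfrac2{1+2\mu\theta/(1+\theta)}$, matching the stated $\delta$.
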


\begin{proof}
First suppose that $S$ has maximum density. We normalize $f$ and $g$ so that $f(S)=g(S)=1$, thus $\rho^*=\rho(S)=1$.

Recall that by duality, %$c_{f,g}(\pi)=  c_{g^{\#},f^{\#}} (\pi')$, where $\pi'$ is the reverse of $\pi$. (The argument is similar to showing that $\varepsilon_{f,g}(\pi) = \varepsilon_{g^{\#},f^{\#}}(\pi')$ in the proof of Lemma~\ref{lem:epsilon}). It follows that 
$\min_\pi c_{f,g}(\pi) = \min_\pi c_{g^{\#},f^{\#}} (\pi)$. Note that $S$ has maximum density with respect to $f$ and $g$ if and only if it has maximum density with respect to $g^{\#}$ and $f^{\#}$. 

Hence, by Lemma~\ref{lem:curv}, for any $A \subset S$,
\[
\theta \le \frac{g(A)}{f(A)} \le 1 \text{ and } \theta \le \frac{f^{\#}(\bar{A})}{g^{\#}(\bar{A})} \le 1.
\]
This means, in particular, that 
\begin{align}
f(A) \le \min \left \{ \frac{g(A)}{\theta}, 1-\theta+ \theta g(A) \right \}. \label{eq:upper-env}
\end{align}
For any search $\pi$, we can write
\begin{align}
c(\pi) &=   \sum_{j=1}^n (g(S_j) - g(S_j - j)) f(S_j) \nonumber\\
&= \frac 1 2 \varepsilon(\pi) + \sum_{j=1}^n \frac 1 2 (g(S_j) - g(S_{j-1})) (f(S_j) + f(S_{j-1})). \label{eq:upper-env2}
\end{align}
The sum in (\ref{eq:upper-env2}) is the area under the piecewise linear curve in $\mathbb R^2$ connecting the points $(g(S_j),f(S_j)), j=0,1,\ldots,n$. By (\ref{eq:upper-env}), this is at most the area under the curve ${y = \min \left \{ x/\theta, 1-\theta+ \theta x \right \}}, x \in [0,1]$, which can be easily calculated to be $1/(1+ \theta)$. 

Since the expected cost is always bounded above by $1$, it follows that
\[
c(\pi) \le \min \left  \{\frac 1 {1+\theta} + \frac 1 2 \varepsilon(\pi), 1 \right \}.
\]
Now consider an optimal search $\pi^*$. For this search, the sum in~(\ref{eq:upper-env2}) is at least $1/2$, since ${f(A) \ge g(A)}$ for any $A \subset S$. By Lemma~\ref{lem:epsilon} we can choose $\pi$ to be some search such that ${\varepsilon(\pi^*)  \ge \max \{1- \kappa_f, 1- \kappa_{g^\#}\}\varepsilon(\pi)}$. So (\ref{eq:upper-env2}) implies that
\[
c(\pi^*) \ge \frac 1 2 + \frac 1 2 \max \{1- \kappa_f, 1- \kappa_{g^\#}\} \varepsilon(\pi).
\] 
Hence
\[
\frac{c(\pi)}{c(\pi^*)} \le \frac {\min \left  \{\frac 1 {1+\theta} + \frac 1 2 \varepsilon(\pi), 1 \right \}}{\frac 1 2 + \frac 1 2 \max \{1- \kappa_f, 1- \kappa_{g^\#}\} \varepsilon(\pi)}.
\]
This is maximized either at $\varepsilon(\pi) = \frac{2\theta}{1+\theta}$ or $\varepsilon = 0$, giving the first bound in the statement of the theorem.

If either $f$ or $g$ is modular then $	\delta = \min \left \{ \theta, \frac{2 \theta }{1+\theta } \right \} = \theta$.

If $S$ does not have maximum density, then a similar induction argument to that of Theorem~\ref{thm2} completes the proof.
\end{proof}

We note that we would be able to improve the approximation ratio in Theorem~\ref{thm:curvature} to $\frac{2}{1+\theta}$ for arbitrary submodular $f$ and supermodular $g$ if we could find an exact solution to the problem of minimizing $\varepsilon(\pi)$ of Lemma~\ref{lem:epsilon}, and we leave this as an open problem. 

\subsection{An optimal search for series-parallel decomposable problems} \label{sec:reducible}

In this section we show how Theorem~\ref{thm1} may be used to determine an optimal search for problems we call {\em series-parallel decomposable}. %We also make a standing assumption in this section that $g$ is modular, and is given by some vector $\mathbf x \in \mathbb R^S$ as $g(A)=x(A)$ for all $A\subset S$. 
The idea for series-parallel decomposability is motivated by the following example of expanding search on a tree, considered in \cite{AlpernLidbetter}. Let $S$ be the vertex set of a tree $T=(S,E)$ with edge set $E$ and each $e\in E$ has weight $w(e)$. Let $r\in S$ be the root of the tree and restrict attention to searches that begin at $r$. For a set of edges $A$, define $f(A)$ to be the sum of the edge weights in the
tree that is spanned by $\{r\}\cup A$. It is clear that if $r$ has degree $1$ then every search begins with the edge incident to $r$. We generalize this principle by defining {\em $f$-initial sets} below. If $r$ has degree greater than $1$, then $T$ is the union of two edge-disjoint subtrees with root $r$, and it is easy to show that there is a maximal density subset of $S$ whose elements are the vertices of one of these subtrees. So the problem of finding an optimal search can be decomposed. We generalize this principle using the concept of separators. We say that a proper non-empty subset $B\subset S$ is a {\em separator} of $f$ if $f$ is the direct sum of $f|_B$ and $f|_{\overline{B}}$. In order to check that $B$ is an $f$-separator, we only need to verify that $f(S)=f(B)+f(\bar{B})$ (see \cite[Proposition 5]{Cunningham}). 

\paragraph{The $f$-initial sets}
For a set $A\subset S$, we define the {\em $f$-closure} $cl(A)$ of $A$ as the maximal set $B$ containing $A$ such that $f(B)=f(A)$; there is a unique such set. We say that a proper subset $I \subset S$ is an {\em $f$-initial set} if $I \subset cl({s})$ for every $s \in \bar{I}$. In the case that $f$ corresponds to the special case of precedence-constrained scheduling, $f$-initial sets and $f$-closures correspond to the usual notions of an initial sets and closures with respect to the precedence constraints. We leave it to the reader to check that a set $I$ is an $f$-initial set if and only if for any subset $A \subset S$ that contains some element of $\bar{I}$, we have $f(A \cup I) = f(A)$.

Note that if there is an $f$-initial set, then the total curvature of $f$ is $1$ (that is, the worst possible). Therefore our approximation given in Theorem~\ref{thm:curvature} is not helpful. However, it is easy to show that there is an optimal search with initial segment $I$.
\begin{lemma}\label{lem:initial}
	If $I$ is an $f$-initial set or $\bar{I}$ is a $g^\#$-initial set, then there exists an optimal search with initial segment $I$.
\end{lemma}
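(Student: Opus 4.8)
The plan is to handle the two hypotheses separately, reducing the second to the first by duality. Consider first the case that $I$ is an $f$-initial set. I would use the equivalent characterization noted above: for every $A\subset S$ meeting $\bar I$ we have $f(A\cup I)=f(A)$, and hence (by monotonicity, since any single $s\in I$ satisfies $A\cup s\subset A\cup I$) also $f(A\cup s)=f(A)$ for such $A$. Starting from an arbitrary optimal search $\pi$, I would apply an adjacent pairwise interchange argument in the spirit of Lemma~\ref{lem:perm}, repeatedly moving an element of $I$ in front of an element of $\bar I$ that immediately precedes it, and show each such move does not increase the cost.

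Concretely, suppose $t\in\bar I$ is searched immediately before $s\in I$, let $T$ be the set searched strictly before $t$, and let $\pi^\dagger$ be obtained by swapping $s$ and $t$. Only the two terms for $s$ and $t$ change. The key structural facts are $f(T\cup\{s,t\})=f(T\cup t)$ (adding the $I$-element $s$ to the set $T\cup t$, which meets $\bar I$, is free) and $f(T\cup t)\ge f(T\cup s)$ (since $f(T\cup t)=f((T\cup t)\cup I)\ge f(T\cup s)$, using that $T\cup t$ meets $\bar I$ and $T\cup s\subset (T\cup t)\cup I$). Substituting these, the change in cost telescopes to
\[
c(\pi)-c(\pi^\dagger)=\bigl(f(T\cup t)-f(T\cup s)\bigr)\bigl(g(T\cup s)-g(T)\bigr)\ge 0,
\]
where non-negativity uses the two facts above together with the monotonicity of $g$. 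Thus $\pi^\dagger$ is again optimal. Since each such swap strictly decreases the number of pairs in which an element of $\bar I$ precedes an element of $I$, the process terminates after finitely many steps at an optimal search in which no element of $\bar I$ precedes any element of $I$; that search has initial segment $I$.

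For the second case, suppose $\bar I$ is a $g^\#$-initial set. Here I would invoke the duality relation $c_{f,g}(\pi)=c_{g^\#,f^\#}(\pi')$ (where $\pi'$ is the reverse of $\pi$) established above, which shows that $\pi$ is optimal for $(f,g)$ if and only if its reverse is optimal for the dual instance with submodular cost $g^\#$ and supermodular weight $f^\#$. Applying the first case to this dual instance, whose cost function is $g^\#$ and for which $\bar I$ is an initial set, yields an optimal dual search with initial segment $\bar I$; reversing it produces an optimal search for $(f,g)$ in which $\bar I$ is the final segment, i.e.\ with initial segment $I$.

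The crux of the argument is the single inequality $f(T\cup t)\ge f(T\cup s)$, comparing the cost of appending the ``expensive'' outside element $t\in\bar I$ against the ``cheap'' initial element $s\in I$: a priori these are incomparable, and the point is that the $f$-initial hypothesis forces $f(T\cup t)=f(T\cup t\cup I)$, after which monotonicity closes the gap. I expect this to be the main obstacle, in particular verifying it uniformly when the prefix $T$ happens to lie entirely inside $I$ (so that $s$ genuinely increases the cost); everything else is bookkeeping. Notably, the argument uses only the monotonicity of $f$ and $g$ together with the initial-set hypothesis, and not the sub/supermodularity.
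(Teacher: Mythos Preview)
Your proof is correct and follows essentially the same route as the paper: an adjacent swap of $t\in\bar I$ immediately preceding $s\in I$, the identity $f(T\cup\{s,t\})=f(T\cup t)$ from the initial-set hypothesis, and the resulting nonnegative difference $(f(T\cup t)-f(T\cup s))\,d_s g(T\cup s)$, with the second case handled by duality. The only cosmetic difference is that the paper packages termination via an extremal choice of $\sigma$ minimizing $\sum_{s\in I}\sigma^{-1}(s)$ rather than your inversion count; your worry about the case $T\subset I$ is unfounded, since $T\cup t$ still meets $\bar I$ and the argument goes through verbatim.
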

\begin{proof}
First suppose that $I$ is an $f$-initial set, and that there are no optimal searches with initial segment $I$. Let $\sigma$ be an optimal search that has been chosen to minimize $\sum_{s \in I} \sigma^{-1}(s)$. Since $I$ is not an initial segment of $\sigma$, there must be some $t \notin I $ that directly preceeds some $s \in I$. Let $A$ be the set of all elements preceeding $t$ and let $\tau$ be the search obtained by switching the order of $s$ and $t$.

Then the difference in expected costs between $\sigma$ and $\tau$ is
\begin{align*}
c(\sigma) - c(\tau) & = f(A \cup \{t\}) d_t g(A \cup \{t\})  + f(A \cup \{s,t\} )d_s g(A \cup \{s,t\}) \\
& \quad - f(A \cup \{s\}) d_s g(A \cup \{s\}) - f(A \cup \{s,t\} )d_t g(A \cup \{s,t\}) \\
&\ge  f(A \cup \{s,t\} )(d_t g(A \cup \{t\}) + d_s g(A \cup \{s,t\}) - d_t g(A \cup \{s,t\}) ) - f(A \cup \{s\}) d_s g(A \cup \{s\}) \\
& = d_t f (A \cup \{s,t\}) d_s g(A \cup \{s\})\\
& \ge 0,
\end{align*}
where the first inequality comes from the fact that $f(A \cup \{t\}) = f(A \cup \{t\} \cup I) \ge f(A \cup \{s,t\})$, since $I$ is an initial set and by monotonicity. Hence, $\tau$ is an optimal search with $\sum_{s \in I} \tau^{-1}(s) < \sum_{s \in I} \sigma ^{-1}(s)$, contradicting the definition of $\sigma$. So there must be an optimal search with initial segment $I$.

If $\bar{I}$ is a $g^\#$-initial set, the fact that there is an optimal search beginning with initial segment $I$ follows immediately from duality. 
\end{proof}

Therefore, if an $f$-initial set $I$ exists, then it follows from Lemma~\ref{lem:decomp} that in order to find an optimal search of $S$ it is sufficient to find an optimal search of $I$ with respect to $f|_I$ and $g|_I$ and an optimal search of $\bar{I}$ with respect to $f_{\overline{I}}$ and $g_{\overline{I}}$. Similarly if $\bar{I}$ is $g^\#$-initial. This is one way in which the problem can be decomposed.

Finding an $f$-initial set can be performed in polynomial time, as we now explain. For any $s \in S$, let $I_s$ be the largest $f$-initial set not containing $s$ (if no such $f$-initial set exists, let $I_s = \emptyset$). If we find a nonempty $I_s$ for any $s\in S$, we can return it as an $f$-initial set. In case $I_s=\emptyset$ for every $s\in S$, we conclude that  there is no $f$-initial set.

In order to find $I_s$, we maintain a candidate $T$, starting with $T=cl(s) - \{s\}$. We know that $I_s \subset T$, and that for every $t \in S -T$, we have $I_s \subset cl(t)$. Hence we take an arbitrary $t \in S - T$ and update $T$ as $T \cap cl(t)$.

We iterate this process: while there exists a $t \in S - T$ that we have not yet examined, we update $T$ as $T \cap cl(t)$. We examine every $t$ at most once. At termination, if $T \neq \emptyset$ then we must have $T \subset cl(t)$ for every $t \in S - T$, showing that $T$ is an $f$-initial set. It is also clear from the construction that $T=I_s$, the largest $f$-initial set disjoint from $s$.

\paragraph{Separators}
The other way that the problem can be decomposed is by finding a separator, as we now explain. For a lattice $\mathcal L$ and $B \subset S$
the restriction to $B$ is $\mathcal L|_B=\{A\in\mathcal L\colon A\subset B\}$.
If $S$ and $T$ are disjoint subsets and if $\mathcal L$ is a lattice in $S$ and $\mathcal N$ is a lattice in $T$, then the
direct sum of these lattices is
$\left\{A\cup A'\colon A\in\mathcal L,\ A' \in \mathcal N\right\}$.
It is a lattice in $S\cup T$.

\begin{lemma}\label{lem:sep}
	If $B$ is a separator of both $f$ and $g$, then $\mathcal M$ is the direct sum of $\mathcal M|_B$ and $\mathcal M|_{\overline{B}}$.
\end{lemma}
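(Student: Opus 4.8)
The plan is to exploit the fact that a separator makes both $f$ and $g$ additive across the partition $\{B,\bar B\}$, and then to read off densities of unions from the mediant inequality~\eqref{eq:frac-1}. Since $B$ is a separator of both $f$ and $g$, for every $A\subset S$ we have $f(A)=f(A\cap B)+f(A\cap\bar B)$ and $g(A)=g(A\cap B)+g(A\cap\bar B)$. Writing $A_1=A\cap B$ and $A_2=A\cap\bar B$, this gives $\rho(A)=\bigl(g(A_1)+g(A_2)\bigr)/\bigl(f(A_1)+f(A_2)\bigr)$, which is a mediant of $\rho(A_1)$ and $\rho(A_2)$ whenever both parts are non-empty (recall $f>0$ on non-empty sets, so the denominators are positive). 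With this observation the whole lemma reduces to two set inclusions between $\mathcal M$ and the direct sum $\{A_1\cup A_2\colon A_1\in\mathcal M|_B,\ A_2\in\mathcal M|_{\bar B}\}$, keeping in mind that $\emptyset$ lies in $\mathcal M$, $\mathcal M|_B$ and $\mathcal M|_{\bar B}$ by the convention $\rho(\emptyset)=\rho^*$.

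First I would prove that $\mathcal M$ is contained in the direct sum. Take $A\in\mathcal M$, so $\rho(A)=\rho^*$, and split $A=A_1\cup A_2$ as above. If both $A_1,A_2$ are non-empty, assume without loss of generality that $\rho(A_1)\le\rho(A_2)$; then~\eqref{eq:frac-1} applied with $a=g(A_1)$, $b=f(A_1)$, $c=g(A_2)$, $d=f(A_2)$ yields $\rho(A_1)\le\rho(A)\le\rho(A_2)\le\rho^*$. Since $\rho(A)=\rho^*$, the rightmost inequalities are forced to be equalities, and the equality clause of~\eqref{eq:frac-1} then propagates to all three, giving $\rho(A_1)=\rho(A_2)=\rho^*$. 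Hence $A_1\in\mathcal M|_B$ and $A_2\in\mathcal M|_{\bar B}$. The cases in which $A_1$, $A_2$, or $A$ itself is empty are immediate from the convention $\rho(\emptyset)=\rho^*$.

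Next I would prove the reverse inclusion. Given $A_1\in\mathcal M|_B$ and $A_2\in\mathcal M|_{\bar B}$, set $A=A_1\cup A_2$ (a disjoint union). If both parts are non-empty then $g(A_i)=\rho^* f(A_i)$, so additivity across the separator gives $g(A)=\rho^*\bigl(f(A_1)+f(A_2)\bigr)=\rho^* f(A)$, i.e.\ $\rho(A)=\rho^*$ and $A\in\mathcal M$; if one part is empty the claim is again immediate. Combining the two inclusions with the fact that $\mathcal M$ is a lattice (Lemma~\ref{lem:direct}), whose restrictions $\mathcal M|_B$ and $\mathcal M|_{\bar B}$ are themselves lattices in $B$ and $\bar B$, establishes that $\mathcal M$ is exactly the direct sum of $\mathcal M|_B$ and $\mathcal M|_{\bar B}$.

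I expect the only delicate point to be the bookkeeping of the degenerate situation in which one side, say $B$, contains no non-empty set of density $\rho^*$, so that $\mathcal M|_B=\{\emptyset\}$. The uniform mediant argument already handles this automatically: a non-empty $A_1$ with $\rho(A_1)<\rho^*$ would satisfy $g(A_1)<\rho^* f(A_1)$, and together with $g(A_2)\le\rho^* f(A_2)$ this would force $\rho(A)<\rho^*$, so any $A\in\mathcal M$ must have $A\cap B=\emptyset$. Thus the direct sum still reproduces $\mathcal M$ precisely, and no separate treatment of the degenerate case is needed beyond noting that the empty set is the neutral element on each side.
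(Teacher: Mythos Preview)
Your proof is correct and follows the same line as the paper: use the separator to write $\rho(A)$ as the mediant of $\rho(A\cap B)$ and $\rho(A\cap\bar B)$ and invoke~\eqref{eq:frac-1} to force both parts to have density $\rho^*$. If anything, your write-up is more complete than the paper's, which records only the forward inclusion and leaves the (easy) reverse inclusion implicit.
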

\begin{proof}
If $A$ has maximum density, then the inequality
\[\rho^*=\rho(A)=\frac{g(A\cap B)+g(A\cap \bar{B})}{f(A\cap B)+f(A\cap \bar{B})}\leq \max\{\rho(A\cap B),\rho(A\cap \bar{B})\}\]
is in fact an equality. It follows that if $A\cap B\neq\emptyset$ then $\rho(A\cap B)=\rho^*$, and if $A\cap \bar{B}\neq\emptyset$, then $\rho(A\cap \bar{B})=\rho^*$. 
\end{proof}

It follows from Lemma~\ref{lem:sep} that if $B$ is a separator then either $B$ or $\bar{B}$ (or both) must contain a subset $A$ of density $\rho^*$. So by Theorem~\ref{thm1}, there exists an optimal search of $S$ with initial segment $A$, where $A$ is a proper subset of $S$. In that case, we can again apply Lemma~\ref{lem:decomp} to decompose the problem of finding an optimal search into two subproblems on $A$ and $\bar{A}$.

If there exists a separator of both $f$ and $g$, then it is possible to find one in strongly polynomial time, using the following method. The {\em connectivity function} of $f$ is defined as $d_f(B)=f(B)+f(\bar{B})-f(S)$.
It is a symmetric non-negative submodular function~\cite{Cunningham}, as is the connectivity function of $h=f-g$.
We say that a non-empty subset $B\subset S$ is a {\em split} if $d_h(B)$ is minimal and $d_h(A)>d_h(B)$ for
all non-empty $A\subset B$. Obviously,
a split is a separator of both $f$ and $g$ if and only if  $d_h(B)=0$, and since a split can be computed from a submodular
function minimization, this can be carried out in strongly polynomial time by Queyranne's algorithm for minimizing symmetric, submodular functions~\cite{Queyranne}. 

%If $C$ is an initial set and $D\subset C$, then
%$d(D)=f(D)+f(D)-f(S)=f(D)+f(S)-f(S)=f(D) \le f(C)$. By symmetry, if $D^c\subset C$ then $d(D)=d(D^c)=f(D)$.
%If neither $D$ nor $D^c$ is a subset of $C$, then $d(D)=f(D\cup C)+f(D^c\cup C)-f(S)\geq f(C)$.
%Therefore, if $C$ is an initial set, then a split $D$ (or its complement) is necessarily contained in $C$. The remainder
%$C\setminus D$ is an initial set for the contraction~$f_D$.

We can now define series-parallel decomposability, which is an extension of an idea from Theorem~3 of~\cite{FKR}.

\begin{definition}
	\label{def:reducible}
	We say the submodular search problem is {\em series-parallel decomposable} if 
	\begin{enumerate}[(i)]
		\item  (series decomposable) there exists some set $I$ such that $I$ is $f$-initial or $\bar{I}$ is $g^\#$-initial, or
		\item (parallel decomposable) there exists some set $B$ that is a separator of both $f$ and $g$.
	\end{enumerate}
	We say that $f$ is {\em series-parallel decomposable} if it can be repeatedly decomposed until all remaining search segments are singletons. 
\end{definition} 

We have shown that if the submodular search problem is series-parallel decomposible, then by decomposing it we can determine an optimal search strategy. We summarize this result with a theorem.

\begin{theorem}\label{thm3}
	Suppose that a submodular function $f$ and the supermodular function $g$ are given by a value oracle. Then we can decide in strongly polynomial time whether the submodular search problem is series-parallel decomposable, and if so we can find an optimal search.
\end{theorem}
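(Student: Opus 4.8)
The plan is to give a recursive algorithm that, at each subproblem $(S', f', g')$ arising from restrictions and contractions of $f$ and $g$, either finds a decomposition and recurses, or certifies that none exists. At a node with $|S'| > 1$ I would run two tests. First, a \emph{parallel} test: using the connectivity function $d_h$ of $h = f' - g'$, compute a split $B$ by Queyranne's algorithm; if $d_h(B) = 0$ then $B$ is a common separator of $f'$ and $g'$, and by Lemma~\ref{lem:sep} the lattice $\mathcal M$ of maximum-density sets is the direct sum of $\mathcal M|_B$ and $\mathcal M|_{\overline{B}}$, so a maximum-density proper subset $A$ lies wholly in $B$ or in $\bar{B}$, whence Theorem~\ref{thm1} gives an optimal search with initial segment $A$. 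Second, a \emph{series} test: run the polynomial-time procedure described above to look for an $f'$-initial set, and dually for a ${g'}^{\#}$-initial set; if a proper nonempty such set $I$ is found, Lemma~\ref{lem:initial} gives an optimal search with initial segment $I$. In either case I would invoke Lemma~\ref{lem:decomp} to reduce to two strictly smaller subproblems, on $A$ (or $I$) under the appropriate restrictions and on its complement under the corresponding contractions, and recurse; singletons are the base case, where the unique search is trivially optimal.

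For the positive instances, correctness is immediate from the cited lemmas: each decomposition step is valid, and combining optimal searches of the two pieces yields an optimal search of the parent, so if the recursion terminates with all leaves singletons we obtain both a certificate of series-parallel decomposability and an explicit optimal search. For the running time, every test is strongly polynomial: the split is a single symmetric submodular minimization, and the initial-set search uses $O(n)$ closure computations, each itself a submodular minimization solvable by the Schrijver or Iwata--Fleischer--Fujishige algorithm. Since each internal node splits its ground set into two nonempty parts, the recursion tree has at most $n$ leaves and hence $O(n)$ nodes, so the total work is strongly polynomial.

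The delicate point, which I expect to be the main obstacle, is the correctness of the \emph{negative} verdict: when the recursion reaches a non-singleton node admitting neither a series nor a parallel decomposition, I must conclude that the original problem is not series-parallel decomposable. The reached node is, by the recursive definition, itself not decomposable, so it suffices to show that series-parallel decomposability is preserved under \emph{every} valid decomposition step, ensuring that the greedy choice made at each level cannot destroy a decomposition that was otherwise available. I would establish this confluence by induction on $|S'|$, analyzing how two distinct decomposition steps interact: the common separators of $f'$ and $g'$ form a lattice under the submodular connectivity function $d_h$, while the $f'$-initial and ${g'}^{\#}$-initial sets behave like the order ideals of a partial order and hence form distributive lattices closed under union and intersection. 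Using these closure properties one shows that performing one decomposition leaves the other available on the relevant piece, so any two maximal decomposition sequences reach the same collection of singleton leaves. Handling this interaction cleanly---in particular the mixed case of a series step against a parallel step---is where the real work lies; the algorithmic and running-time claims then follow directly.
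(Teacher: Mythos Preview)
Your algorithmic skeleton is exactly the paper's: at each node run Queyranne's algorithm on the connectivity function of $h=f'-g'$ to detect a common separator, run the closure-intersection procedure to detect an $f'$-initial set (and its dual for $g'^{\#}$), and upon success split via Lemma~\ref{lem:decomp} and recurse. The paper in fact states Theorem~\ref{thm3} without a formal proof, presenting it as a summary of the preceding discussion, so on the positive side (finding an optimal search when the recursion reaches all singletons) your argument and the paper's coincide.

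Where you go beyond the paper is in flagging the confluence issue for the \emph{negative} verdict. The paper's Definition~\ref{def:reducible} is existential (``can be repeatedly decomposed''), yet the algorithm commits to a particular decomposition at each step; the paper does not explicitly argue that a failure of the greedy procedure implies that \emph{no} decomposition sequence succeeds. You are right that this is the substantive point, and your proposed line---using that common separators form a lattice via the submodular connectivity function, and that $f$-initial sets (respectively $g^{\#}$-initial sets) are closed under union and intersection---is the natural route. One small addition: in the parallel case you still need to compute the maximum-density subset $A$ inside $B$ or $\bar B$, which requires the ratio-maximization routine from Subsection~\ref{sec:2approx}; this is another strongly polynomial submodular minimization per node and should be listed among the per-node costs. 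Note also that the parallel recursion is on $A$ and $\bar A$ rather than on $B$ and $\bar B$, so your confluence argument must handle that the pieces produced do not coincide with the separator itself.
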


As pointed out at the end of Subsection~\ref{sec:2approx}, the submodular search problem can be solved if both $f$ and $g$ are modular. In this case the problem is series-parallel decomposable (by repeated parallel decompositions). The example from \cite{AlpernLidbetter} described at the beginning of this subsection is series-parallel decomposable as well. Theorem~\ref{thm3} also has an interpretation in scheduling, with regards to scheduling jobs whose precedence constraints are given by a series-parallel graph. This is explained in more detail in Subsection~\ref{sec:scheduling}.

We could extend the concept of series-parallel decomposition to a more general notion of {\em logarithmic decomposition}, meaning that the problem can be repeatedly decomposed until all remaining search segments $A$ have cardinality $|A| \le p \log n$, for some (small) constant $p$. Then each search subproblem in such a subset $A$ can be solved by brute force (in $|A|!$ time) or by dynamic programming (in $|A|2^{|A|}$ time; see~\cite{HeldKarp}), resulting in overall time of $O(n^{p+2})$.

\subsection{Applications to Scheduling}
\label{sec:scheduling}
As we outlined in Subsection~\ref{sec:ex}, the submodular search problem has a natural application to single machine scheduling. Theorem~\ref{thm3} generalizes the well-known result that the problem $1\mid prec\mid \sum w_j C_j$ can be solved in polynomial time if the Hasse diagram defined by the precedence constraints on the jobs is a generalized series-parallel graph~\cite{Adolphson,Lawler78}. We define {\em generalized series-parallel} here for completeness. 

Denote a Hasse diagram by a pair $\{N,E\}$ of nodes $N$ and directed edges $E \subset N^2$. For disjoint vertex sets $N_1$ and $N_2$, we let $N_1\times N_2=\{(u,v): u\in N_1, v\in N_2\}$ denote the complete directed bipartite graph from $N_1$ to $N_2$. Then
\begin{enumerate}
	\item If $G_1 = \{N_1, E_1\}$ and $G_2 = \{N_2, E_2\}$ are graphs on disjoint vertex sets, then $\{N_1 \cup N_2, E_1 \cup E_2\}$ is the {\em parallel composition} of $G_1$ and $G_2$.
	\item If $G_1 = \{N_1, E_1\}$ and $G_2 = \{N_2, E_2\}$ are graphs on disjoint vertex sets, then $\{N_1 \cup N_2, E_1 \cup E_2 \cup (N_1 \times N_2)\}$ is the {\em series composition} of $G_1$ and $G_2$.
\end{enumerate}

The graph $\{\{i\},\emptyset\}$ containing a single node is generalized series-parallel, and any graph that can be obtained by a finite number of applications of parallel composition or series composition of generalized series-parallel graphs is generalized series-parallel.

Recall that in the framework of the submodular search problem, the problem $1|prec|\sum w_j C_j$ has cost function $f$ such that $f(A)$ is the sum of the processing times of the jobs in the precedence closure of a set $A$ of jobs. If the Hasse diagram corresponding to some precedence constraints is a parallel composition then clearly the corresponding submodular cost function $f$ has a separator. If the Hasse diagram is a series composition then $f$ has an $f$-initial set. Thus the concept of a series-parallel decomposable submodular search problem generalizes the problem $1|prec|\sum w_j C_j$ when the precedence constraints are given by a generalized series-parallel graph. 

It is also possible that the concept of series-parallel decomposability could be used to generalize work in the machine scheduling literature that extends the solution of $1|prec|\sum w_jC_j$ in the generalized series-parallel case, for example~\cite{Sidney-Steiner,Momma-Sidney}. However, this work does not correspond directly with Theorem~\ref{thm3}, and we leave for future work the question of how to link these extensions to our submodular framework.

Theorem~\ref{thm2} can also be applied to the problem $1\mid prec\mid \sum w_A h(C_A)$ in which the object is to minimize the weighted sum of some concave function $h$ of the completion times of {\em subsets} of jobs, where $h$ is given by a value oracle. This problem also has a natural interpretation in terms of searching for multiple hidden objects, where $w_A$ is the probability there are objects hidden in the locations contained in $A$. We summarize this in the following theorem.

\begin{theorem}\label{thm4}	Suppose the non-decreasing, concave real function $h$ and the function ${g: A \mapsto \sum_{B \subset A} w_B}$ are given by a value oracle, where the weights $w_A$ are non-negative. Then there is an algorithm running in strongly polynomial time in $n$ that computes a {$2$-approximation} for $1\mid prec\mid \sum w_A h(C_A)$.
\end{theorem}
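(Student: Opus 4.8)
The plan is to exhibit $1\mid prec\mid\sum w_A h(C_A)$ as a submodular search instance and then apply Theorem~\ref{thm2}. Following the reduction described in Subsection~\ref{sec:ex} under ``scheduling with subset weights,'' I take $f(A)=h(p(\widetilde A))$, where $\widetilde A$ denotes the precedence closure of the job set $A$ and $p(\widetilde A)$ its total processing time, and I set $g(A)=\sum_{B\subset A}w_B$. Everything then reduces to three checks: that $f$ is non-decreasing submodular and $g$ is non-decreasing supermodular, so that Theorem~\ref{thm2} applies; that both value oracles are computable in strongly polynomial time; and that $c(\pi)$ agrees with the scheduling objective on precedence-feasible orderings, so that a $2$-approximate search yields a $2$-approximate schedule.

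For the structural properties, the inner map $A\mapsto p(\widetilde A)$ is the familiar non-decreasing submodular closure function of $1\mid prec\mid\sum w_jC_j$. I would show that post-composition with a non-decreasing concave $h$ preserves submodularity: for $A\subset B$ and $s\notin B$, set $a=p(\widetilde A)$, $a'=p(\widetilde{A\cup\{s\}})$, $b=p(\widetilde B)$, $b'=p(\widetilde{B\cup\{s\}})$; monotonicity and submodularity of the inner function give $a\le b$, $a'\le b'$ and $a'-a\ge b'-b$, and since $h'$ is non-negative and non-increasing, its increment over the longer, more leftward interval $[a,a']$ dominates that over $[b,b']$, i.e.\ $h(a')-h(a)\ge h(b')-h(b)$, which is exactly the submodularity of $f$; monotonicity of $f$ is immediate. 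For $g$, each weight $w_B$ contributes to $g(X)$ precisely when $B\subset X$, so supermodularity reduces to the pointwise indicator inequality $[B\subset A\cup A']+[B\subset A\cap A']\ge[B\subset A]+[B\subset A']$, which I would verify by cases; non-negativity of the $w_B$ makes $g$ non-decreasing.

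Computing $f(A)$ needs only the closure $\widetilde A$, a sum of processing times, and one call to the $h$-oracle, hence is polynomial, while the $g$-oracle is supplied by hypothesis; so Theorem~\ref{thm2} returns, in strongly polynomial time, a search $\pi$ with $c(\pi)\le 2\min_\sigma c(\sigma)$. To tie $c$ to the scheduling cost I would evaluate $c(\pi)$ for a precedence-respecting $\pi$: each prefix $S_j^\pi$ is then closed, so $f(S_j^\pi)=h(p(S_j^\pi))$, and $d_jg(S_j^\pi)=\sum_{B\subset S_j^\pi,\,j\in B}w_B$ gathers exactly the subsets $B$ whose last-completed job is $j$, whence summing over $j$ gives $c(\pi)=\sum_A w_A\,h(C_A)$. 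Since an optimal feasible schedule is itself an admissible $\sigma$, this yields $\min_\sigma c(\sigma)\le\mathrm{OPT}$, where $\mathrm{OPT}$ is the scheduling optimum.

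The main obstacle is that the search returned by Theorem~\ref{thm2} need not respect the precedence constraints. I would resolve this by the mechanism underlying Lemma~\ref{lem:initial}: since $f(X)=h(p(\widetilde X))$ is unchanged when $X$ is enlarged by elements already in $\widetilde X$, any prefix that contains a job $j$ but omits a predecessor $i\prec j$ already ``pays'' for $i$, so an adjacent interchange pulling such a predecessor earlier cannot increase $c$. Iterating these interchanges—each of which strictly reduces the number of precedence inversions and hence terminates—repairs $\pi$ into a precedence-feasible schedule $\pi'$ with $c(\pi')\le c(\pi)$. As $\pi'$ is feasible, its scheduling cost equals $c(\pi')$, and the chain $c(\pi')\le c(\pi)\le 2\min_\sigma c(\sigma)\le 2\,\mathrm{OPT}$ shows that $\pi'$ is a $2$-approximation computable in strongly polynomial time, which completes the argument.
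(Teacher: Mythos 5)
You take the same route as the paper, whose entire proof of Theorem~\ref{thm4} is the observation that Theorem~\ref{thm2} applies once one knows that $f(A)=h(p(\widetilde{A}))$ is non-decreasing and submodular and that $g(A)=\sum_{B\subset A}w_B$ is non-decreasing and supermodular. Your verifications of these two facts (the interval-domination argument for the concave composition, and the indicator inequality for $g$), of the oracle requirements, and of the identity $c(\pi)=\sum_A w_A h(C_A)$ for precedence-feasible $\pi$ are all correct; they make explicit what the paper treats as known.

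The genuine gap is the termination of your repair step. The per-swap cost bound is fine: if $v$ lies in the closure of the elements preceding it and $u$ is immediately before $v$, with $A$ the set of elements before $u$, then $f(A\cup\{u\})=f(A\cup\{u,v\})$ and a short computation gives $c(\pi)-c(\pi'')=d_v g(A\cup\{v\})\,\bigl(f(A\cup\{u,v\})-f(A\cup\{v\})\bigr)\ge 0$ for the swapped order $\pi''$. But the claim that each such interchange ``strictly reduces the number of precedence inversions'' is false: moving the predecessor $i$ left past $u$ removes an inversion only when $i\prec u$; it changes nothing when $u$ and $i$ are incomparable, and it \emph{creates} an inversion when $u\prec i$. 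Concretely, take $u\prec v\prec j$ and the order $(j,u,v)$, which has two inversions. Pulling $v$, an omitted predecessor of $j$, past $u$ gives $(j,v,u)$ with three inversions, and pulling $u$, an omitted predecessor of $v$, then restores $(j,u,v)$; so the procedure as described can cycle forever, and its polynomial termination is exactly what is unproven. This is fixable: either (i) always repair the earliest-positioned job that has a successor before it --- then the element immediately before it can never be one of its predecessors (such an element would itself be an earlier violating job), so no inversion is ever created and, within at most $n$ swaps, the job meets one of its successors and an inversion is destroyed; or, more simply, (ii) drop interchanges altogether and output the schedule $\pi'$ that lists the nested closed sets $\widetilde{S_j^\pi}$, for $j$ taken in the order of $\pi$, block by block, each new block in any feasible order. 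Then every job of $\widetilde{S_j^\pi}$ completes under $\pi'$ by time $p(\widetilde{S_j^\pi})$, so every set $B$ whose last element under $\pi$ is $j$ satisfies $C_B^{\pi'}\le p(\widetilde{S_j^\pi})$, and therefore
\[
\sum_{B}w_B\,h\bigl(C_B^{\pi'}\bigr)\ \le\ \sum_{j=1}^n d_j g(S_j^\pi)\,h\bigl(p(\widetilde{S_j^\pi})\bigr)\ =\ c(\pi)\ \le\ 2\min_\sigma c(\sigma)\ \le\ 2\,\mathrm{OPT},
\]
where $\mathrm{OPT}$ denotes the optimal scheduling cost; this closes the gap and completes your argument.
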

\begin{proof}
This is an immediate consequence of Theorem~\ref{thm2}, due to the fact that the composition of the concave function $h$ with the submodular function $f:A \mapsto p(\widetilde A)$ is itself submodular and the function $g: A \mapsto \sum_{B \subset A} w_B$ is supermodular. 
\end{proof}

It is worth noting since $h$ is applied to costs in this model, a concave $h$ corresponds to larger losses having a decreasing marginal disutility. This reflects a risk seeking attitude, a less common assumption than risk aversion.

We may also consider the more specific problem $1||\sum w_j h(C_j)$ for some non-decreasing function $h$. In~\cite{Megow}, a polynomial time approximation scheme is given for the problem. If $h(C_j)=C_j^\beta$ for $\beta \neq 1$, then it is
the problem considered in \cite{Bansal}. It is unknown whether there exists a polynomial
time algorithm to compute an optimal schedule for this problem, or if it is NP hard (see \cite{Bansal}
and the references therein). For concave or convex $h$, it is shown in~\cite{Stiller} that Smith's rule (for the original problem $1||\sum_j w_j C_j$) yields a $(\sqrt{3}+1)/2 \approx 1.37$-approximation to the problem $1|| \sum_j w_j h(C_j)$, while~\cite{Hohn} gives explicit formulas for the exact approximation ratio of Smith's rule for this problem in terms of a maximization over two continuous variables. 

Of course, our algorithm gives rise to a Sidney decomposition for $1|| \sum_j w_j h(C_j)$ that is not necessarily consistent with the application of Smith's rule for the problem $1||\sum_j w_j C_j$. Furthermore, Theorem~\ref{thm1} implies that every optimal schedule must follow a Sidney decomposition of our type. If $h$ defines a submodular cost function $f$ with low total curvature, we can use Theorem~\ref{thm:curvature} to express the approximation ratio of our algorithm in a simple form that may be better than the $1.37$-approximation in~\cite{Stiller}. For example, if $\kappa_f = 1/2$ and $\kappa_{g^\#}=0$ then our approximation ratio is $2/(1+1/2) = 1.33$. Note that the curvature $\kappa_f$ of $f$ is given by
\[
1-\kappa_f = \min_A \frac{h(p(S))- h(p(A))}{h(p(S)-p(A))}  \ge \inf_{y \in [0,p(S)]} \frac{h(p(S)) - h(y)}{h(p(S)-y)},
\]
where $p:2^S \rightarrow \mathbb R$ denotes processing time. Since the fraction on the right is the ratio of a decreasing concave function to a decreasing convex function, its infinum is achieved in the limit as $y \rightarrow p(S)$. 
Since $g^{\#}$ is modular in this problem, applying Theorem~\ref{thm:curvature}, we obtain the following.
\begin{theorem}\label{thm:noprec}
	Suppose $h:\mathbb{R}^+ \rightarrow \mathbb{R}^+$ is a non-decreasing concave real function. Then there is a schedule for $1||\sum w_j h(C_j)$ that can be computed in strongly polynomial time and approximates an optimal schedule with approximation ratio $\frac{2}{2-\kappa_f}$. The parameter $\kappa_f$ is given by
	\[
	1-\kappa_f = \lim_{y \rightarrow p(S)}  \frac {h(p(S))-h(y)}{h(p(S)-y)} = \frac {h'(p(S))}{h'(0)},
	\]
	where the second equality holds by l'H\^{o}pital's rule if $h$ is differentiable at $0$ and $p(S)$.
\end{theorem}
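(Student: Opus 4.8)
The plan is to obtain the approximation ratio as an immediate corollary of Theorem~\ref{thm:curvature} and then to pin down the total curvature $\kappa_f$ by an explicit calculation from $h$.

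For the ratio, I would first record the reduction. Since there are no precedence constraints, the precedence closure is trivial, so the cost function is $f(A)=h(p(A))$; this is non-decreasing and submodular, being the composition of the concave non-decreasing $h$ with the modular map $p$. The weight function $g(A)=\sum_{j\in A}w_j$ is modular, and hence so is its dual $g^\#$, giving $\kappa_{g^\#}=0$ and $1-\kappa_{g^\#}=1$. Applying the modular case of Theorem~\ref{thm:curvature}, where the ratio is $\tfrac{2}{1+\theta}$ with $\theta=(1-\kappa_f)(1-\kappa_{g^\#})=1-\kappa_f$, yields a strongly polynomial schedule of approximation ratio $\tfrac{2}{1+(1-\kappa_f)}=\tfrac{2}{2-\kappa_f}$.

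It then remains to identify $\kappa_f$. Writing $P=p(S)$ and using the definition of total curvature, $1-\kappa_f=\min_{s\in S}\frac{f(S)-f(S-s)}{f(s)}=\min_{s\in S}\frac{h(P)-h(P-p_s)}{h(p_s)}$. Setting $\phi(y)=\frac{h(P)-h(y)}{h(P-y)}$ and taking $A=S-s$, this minimum is $\min_{A}\phi(p(A))$, which is bounded below by the continuous infimum $\inf_{y\in[0,P)}\phi(y)$. The key step is to show that $\phi$ is \emph{non-increasing} on $[0,P)$, so that this infimum is attained in the limit $y\to P$. I would prove this by checking that the numerator of $\phi'(y)$, namely $(h(P)-h(y))h'(P-y)-h'(y)h(P-y)$, is non-positive: writing $a=y$ and $b=P-y$, concavity of $h$ gives $h(a+b)-h(a)\le b\,h'(a)$ and $h(b)=h(b)-h(0)\ge b\,h'(b)$ (using $h(0)=0$), so that $(h(a+b)-h(a))h'(b)\le b\,h'(a)h'(b)\le h'(a)h(b)$. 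Finally, since both numerator and denominator of $\phi$ vanish as $y\to P$, l'H\^opital's rule evaluates the limit as $\tfrac{h'(P)}{h'(0)}$.

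The main obstacle is the monotonicity of $\phi$; the rest is bookkeeping. One subtlety I would flag is that the true curvature equals the discrete quantity $\min_{A}\phi(p(A))$, which in general only majorizes the closed-form limit $\lim_{y\to P}\phi(y)$. Since a larger value of $1-\kappa_f$ only decreases the ratio $\tfrac{2}{2-\kappa_f}$, replacing the exact curvature by this limiting value still yields a valid, if conservative, guarantee.
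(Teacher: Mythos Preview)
Your proposal is correct and follows essentially the same route as the paper: reduce to the submodular search problem with $f(A)=h(p(A))$ and modular $g$, invoke the modular case of Theorem~\ref{thm:curvature} to get the ratio $\tfrac{2}{1+(1-\kappa_f)}$, and then identify $1-\kappa_f$ with $\lim_{y\to p(S)}\phi(y)$ via the monotonicity of $\phi$ and l'H\^opital. Your derivative computation for the monotonicity of $\phi$ is in fact more careful than the paper's one-line justification (which simply asserts that the infimum of the ratio of a decreasing concave to a decreasing convex function is attained at the endpoint), and your closing remark that the limiting value only lower-bounds the true $1-\kappa_f$, hence yields a conservative but valid ratio, is exactly the right caveat.
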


By way of an example, we scale so that $p(S)=1$ and take the standard log utility function \cite{Neumann}: $h(y)=\log(1+ay)$, for some positive constant $a$. Then Theorem~\ref{thm:noprec} implies that we can find a schedule that approximates an optimal schedule for $1||\sum w_j h(C_j)$ with approximation ratio $1+a/(2+a)$ .

Another classic example is the function $h(y) = (1-e^{-ry})/r$ (with discount rate $r >0$) for continuously discounted search (or wait) time, as in~\cite{Rothkopf}. For this choice of $h$, our approximation ratio for $1||\sum w_j h(C_j)$ is  $2/(1+e^{-r})$.

We conclude this section by arguing that the submodular search problem really is more general than $1\mid prec\mid \sum w_A h(C_A)$. Indeed, consider the problem with $S=\{1,2,3\}$ and $f(1)=f(2)=f(3)=1$, $f(1,2)=f(1,3)=2$, $f(2,3)=3/2$, $f(1,2,3)=2$. Suppose $f$ is defined by some partial order on $S$, some processing times $p_j$ and some concave function $h$ of completion times, as in the proof of Theorem~\ref{thm4}. Then the partial order on the jobs must be an antichain (that is, no jobs are comparable), since otherwise we would have $f(A)=f(B)$ for some $1=|A| \subsetneq |B|$. It must also be the case that $p_1=p_2=p_3$, because if not, by the concavity of $h$ and because $f(1)=f(2)=f(3)=1$, it would have to be the case that $f(A)=1$ for all $A \neq \emptyset$. But then $2=f(1,2)=h(p_1+p_2)=h(p_2+p_3)=f(2,3)=3/2$, a contradiction.

\section{The Submodular Search Game}
\label{sec:game}

We now turn to the submodular search game and we seek optimal mixed strategies for the players, settling a question from~\cite{FKR}. Here, each Hider's mixed strategy (probability distribution of $S$) $\mathbf x$ defines a modular function $g$ where $g(A) = \mathbf x(A)$ for all $A \subset S$.
We use $c_{f,\mathbf{x}}$ to denote the search cost for such a $g$.
A mixed strategy for the Searcher is some $\mathbf p$ which assigns a probability $\mathbf p(\pi)$ to each pure strategy $\pi$. We denote by $C_f(\mathbf p,\mathbf{x})$ the expected search cost for mixed strategies $\mathbf p$ and $\mathbf{x}$, where $\mathbf{p}$ and $\mathbf{x}$ are independent. 
That is,
\[
C_f(\mathbf{p},\mathbf{x})=\sum_{\pi} \mathbf p(\pi) c_{f,\mathbf{x}}(\pi).
\]
We suppress the subscript $f$ when the context is clear.

Recall the definition of the  {\em submodular base polyhedron} $\mathbb{B}(f)$ in
\eqref{eq:submod-polytope}.
%The base polytope of the scaled cost function $\frac 1 {f(S)} f$ is a subset of Hider mixed strategies in the submodular search game. It is non-empty if $f$ is a non-negative submodular function~(\cite[page 59]{Fujishige}).
We apply Theorem~\ref{thm1} to settle a question from \cite{FKR}.

\begin{theorem}
	\label{cor:hider}
	Every equilibrium strategy for the Hider in the submodular search game is in the scaled base polyhedron $\frac 1 {f(S)} \mathbb{B}(f)$.
\end{theorem}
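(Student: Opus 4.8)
The plan is to show that every Hider equilibrium strategy $\mathbf{x}^*$ must satisfy $\mathbf{x}^*(A) \le \tfrac{1}{f(S)} f(A)$ for every $A \subset S$, with equality at $A = S$; this is exactly membership in the scaled base polyhedron $\tfrac{1}{f(S)}\mathbb{B}(f)$. The equality $\mathbf{x}^*(S) = 1 = \tfrac{1}{f(S)} f(S)$ is immediate since $\mathbf{x}^*$ is a probability distribution, so the content is the collection of inequalities $\mathbf{x}^*(A) f(S) \le f(A)$. My strategy is to connect these inequalities to the density condition via Theorem~\ref{thm1}, exploiting the fact that at equilibrium the Hider cannot benefit the Searcher by concentrating probability on a low-cost set.

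\textbf{First} I would recall that for a modular weight function $g(A) = \mathbf{x}(A)$, the density of a set $A$ is $\rho(A) = \mathbf{x}(A)/f(A)$, and Theorem~\ref{thm1} tells us that any optimal Searcher response to $\mathbf{x}$ begins with a maximum-density set. The key observation I would make is a \emph{minimax} one: let $V$ denote the value of the game. For a fixed Hider strategy $\mathbf{x}$, the best the Searcher can do is the optimal search cost $\min_\pi c_{f,\mathbf{x}}(\pi)$, and by Lemma~\ref{lem1} applied after pulling out the maximum-density initial segment, this cost is closely controlled by $\rho^*(\mathbf{x}) = \max_A \mathbf{x}(A)/f(A)$. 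Concretely, I expect to show that the value of the game equals $V = \min_{\mathbf{x}} \max_\pi \big(\text{something}\big)$ and that a Hider strategy can only be optimal if its maximum density $\rho^*$ is as small as possible.

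\textbf{The crux} is the following claim, which I would isolate: if $\mathbf{x}^*$ is an equilibrium Hider strategy, then $\rho^*(\mathbf{x}^*) = \max_A \mathbf{x}^*(A)/f(A)$ is minimized over all distributions, and in fact $\rho^*(\mathbf{x}^*) = 1/f(S)$. To see the direction that matters, note that by Lemma~\ref{lem1} (applied to the maximum-density set $A^*$ realizing $\rho^*$), the Searcher has a response of cost at most $g(A^*)f(A^*) = \rho^*(A^*) f(A^*)^2$, but more usefully the Searcher can guarantee a cost bounded below in terms of $\rho^*$. If some set $A$ had $\mathbf{x}^*(A) > f(A)/f(S)$, i.e.\ $\rho(A) > 1/f(S) = \rho(S)$, then $A$ would be a strictly-higher-density set than $S$; the Searcher would exploit this by searching $A$ first, driving the cost \emph{below} what the Hider could tolerate at equilibrium. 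Thus equilibrium forces $\rho(A) \le 1/f(S)$ for all $A$, which rearranges to $\mathbf{x}^*(A) \le f(A)/f(S)$, the desired polyhedral inequalities.

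\textbf{The main obstacle} I anticipate is making the exploitation argument quantitatively airtight: I must convert the statement ``$A$ has density exceeding $\rho(S)$'' into a strict improvement for the Searcher that contradicts the equilibrium value, rather than merely a heuristic ``search high density first.'' The cleanest route is to use that any equilibrium $\mathbf{x}^*$ must make \emph{every} pure Searcher strategy in the support equally costly (and no cheaper response available), so I would compare the cost of a search beginning with $A$ against a search beginning with $S$-spreading, using Theorem~\ref{thm1} to certify that beginning with a maximum-density set is optimal for the Searcher and hence lowers the guaranteed payoff whenever $\rho^*(\mathbf{x}^*) > \rho(S)$. I would also need to confirm the scaling constant is exactly $1/f(S)$ rather than some other value, which follows by testing the uniform-density situation where $S$ itself is a maximum-density set; by Lemma~\ref{lem:curv} and the structure of Lemma~\ref{lem1}, this pins $\rho^*(\mathbf{x}^*) = 1/f(S)$ and completes the identification with $\tfrac{1}{f(S)}\mathbb{B}(f)$.
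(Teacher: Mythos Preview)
Your high-level plan is right: the content of the theorem is exactly the inequality $\rho(A)\le \rho(S)=1/f(S)$ for every $A$, and Theorem~\ref{thm1} is indeed the tool that links density to optimal Searcher play. But the contradiction step, which you correctly flag as the ``main obstacle,'' is not yet an argument, and the direction in which you try to resolve it will not close the gap.

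You propose to show that if some $A$ has $\rho(A)>\rho(S)$ then ``the Searcher would exploit this by searching $A$ first, driving the cost below what the Hider could tolerate at equilibrium.'' The problem is that you have no independent handle on the value $V$ to compare against: all you know is $V=\min_\pi c(\pi,\mathbf{x}^*)$ \emph{if} $\mathbf{x}^*$ is optimal, which is precisely what is in question. Comparing ``a search beginning with $A$'' to ``a search beginning with $S$-spreading'' does not produce a strict inequality against $V$; Theorem~\ref{thm1} only tells you which search is a best response to $\mathbf{x}^*$, not that this best response beats the value. (Also, Lemma~\ref{lem:curv} concerns curvature and plays no role here; the constant $1/f(S)$ is forced simply because $\mathbf{x}^*(S)=1$.)

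The missing idea is to look at the \emph{Hider's} deviation, via the saddle-point property. Let $M\subsetneq S$ be the maximal maximum-density set and let $\mathbf{p}^*$ be an equilibrium Searcher strategy. Every pure search in the support of $\mathbf{p}^*$ is a best response to $\mathbf{x}^*$, so by Theorem~\ref{thm1} each such search has $M$ as an initial segment. Now pick $k\in\bar M$ (so $f(M\cup\{k\})>f(M)$ by maximality of $M$) and shift all of the Hider's mass on $M$ onto $k$, obtaining $\mathbf{y}$. Against \emph{this fixed} $\mathbf{p}^*$ one computes
\[
C(\mathbf{p}^*,\mathbf{y})-C(\mathbf{p}^*,\mathbf{x}^*)\ \ge\ \mathbf{x}^*(M)\bigl(f(M\cup\{k\})-f(M)\bigr)\ >\ 0,
\]
contradicting that $\mathbf{x}^*$ is a best response to $\mathbf{p}^*$. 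This is exactly the paper's argument; your proposal has all the ingredients except this Hider-side deviation, which is what makes the contradiction quantitative.
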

\begin{proof}
By contradiction.
Suppose that $\mathbf x$ is an equilibrium Hider strategy, but it is not in the base polyhedron. Then
there exists a subset $A$ such that $\mathbf x(A)>f(A)/f(S)$, so that $\rho(A)>1/f(S)=\rho(S)$. It follows that the largest subset $M$ of maximum density is a proper subset of $S$.
Any pure strategy best response $\pi$ to $\mathbf x$ searches $M$ first, by Theorem~\ref{thm1}, and hence an optimal mixed strategy of the Searcher assigns positive probability only to orderings of $S$ with initial segment $M$. Now informally, an optimal response
to these Searcher strategies is to hide in $\bar{M}$, which cannot be an equilibrium strategy. 

More formally, we observe that every pure search strategy in the support of an equilibrium strategy $\mathbf p$ is a best response to $\mathbf x$, so it must start by searching the whole of $M$. Let $k \in \bar{M}$, then we must have $f(M\cup\{k\})>f(M)$, otherwise $M$ cannot be maximal. Define $\mathbf y$ by $\mathbf y(M)=0,\mathbf y(k)=\mathbf x(k)+\mathbf x(M)$ and $\mathbf y(j)=\mathbf x(j)$ for any $j \notin M\cup\{k\}$. Then we have
\[
C(\mathbf p,\mathbf y)-C(\mathbf p,\mathbf x) \ge \mathbf x(M)(f(M\cup\{k\})-f(M)) >0,
\]
so $\mathbf x$ cannot be a best response to $\mathbf p$: a contradiction. Hence we must have $M=S$ and $\mathbf x$ is in the base polyhedron of $\frac{1}{f(S)}f$. 
\end{proof}

Combining Theorem~\ref{cor:hider} with Lemma~\ref{lem1}, the value of the submodular search game must lie between $f(S)/2$ and $f(S)$. Also, any Hider strategy $\mathbf x$ in the base polyhedron of $\frac{1}{f(S)} f$ is a {\em $2$-approximation for the Hider's equilibrium strategy}, in the sense that $\min_{\mathbf p} C(\mathbf p, \mathbf x) \ge V/2$, where $V$ is the value of the game. Furthermore, {\em any} Searcher strategy $\mathbf p$ is a {\em $2$-approximation for the Searcher's equilibrium strategy}, in the sense that $\max_{\mathbf x} C(\mathbf p, \mathbf x) \le 2V$.

%Corollary~\ref{cor:hider} shows that the optimal Hider strategy is in the base polyhedron $\mathbb{B}_f$ of $f$ but it does not say that {\em every} Hider strategy that is in the base polyhedron is optimal. By Lemma~\ref{lem1}, any Hider strategy in $\mathbb{B}_f$ is
%a $2$-approximation of an optimal Hider strategy (in other words, it guarantees an expected search cost of at least half of the value of the game). In fact, it is marginally better, 
%by the following argument.
%If $f$ is a modular function such that $f(S)=1$, then $\mathbb{B}_f$ is a singleton, and the optimal Hider strategy
%is given by $\mathbf x(s)=f(s)$. Therefore, if $\mathbf x$ is in the base polyhedron, it
%is an optimal strategy in the search game with modular cost function $\mathbf x$. The value of this
%game is equal to $\frac 12+\frac 1{2|S|}$, so the payoff of $\mathbf x$ against $f$ is at least
%$\frac 12+\frac 1{2|S|}$. Of course, this hardly improves the lower bound of Lemma~\ref{lem1}.

We can also find strategies that are better approximations for the equilibrium strategies for cost functions with total curvature less than $1/2$. Define the modular function $h(A)=\sum_{s\in A}f(s)$. Then $f(A)\leq h(A)$, and $f(A)\geq (1-\kappa)h(A)$ for all $A \subset S$. It follows that for any mixed strategies $\mathbf p$ and $\mathbf{x}$, we have $C_h(\mathbf p,\mathbf{x}) \ge C_f(\mathbf p,\mathbf{x}) \ge (1-\kappa) C_h(\mathbf p,\mathbf{x})$.

Two different solutions to the search game with modular cost function $h$ can be found in~\cite{AlpernLidbetter} and~\cite{Lidbetter}. The equilibrium strategy for the Hider (shown to be unique in~\cite{AlpernLidbetter}) is given by $\mathbf{x}^h(s)=h(s)/h(S)=f(s)/h(S)$. The equilibrium strategy $\mathbf p^h$ for the Searcher given in~\cite{Lidbetter} is to begin with an element $s$ with probability $\mathbf{x}^h(s)$ and to search the remaining elements in a uniformly random order. Note that this not an extreme point solution to the LP defining an equilibrium Searcher strategy. The equilibrium strategy for the Searcher given in~\cite{AlpernLidbetter} is less concise to describe, and is iteratively constructed, much like the Searcher strategy of Theorem~\ref{thm5}, which generalizes it.

\begin{proposition} \label{prop:game}
	Suppose $f$ has total curvature $\kappa < 1/2$. Then the equilibrium strategies $\mathbf{x}^h$ and $\mathbf p^h$ in the submodular search game with cost function $h$ are $\left( \frac 1{1-\kappa} \right)$-approximations for the equilibrium strategies in the submodular search game with cost function $f$.
\end{proposition}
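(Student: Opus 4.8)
The plan is to leverage the approximation bound established just before the proposition statement, namely that for any pair of mixed strategies $\mathbf p$ and $\mathbf x$, the costs under $f$ and under the modular function $h$ satisfy $C_h(\mathbf p,\mathbf x) \ge C_f(\mathbf p,\mathbf x) \ge (1-\kappa)C_h(\mathbf p,\mathbf x)$. This sandwich relation is the entire engine of the argument: it lets us transfer optimality guarantees for the game with cost $h$ into approximate guarantees for the game with cost $f$. Let $V_f$ and $V_h$ denote the values of the submodular search game with cost functions $f$ and $h$ respectively, and recall that $\mathbf x^h$ and $\mathbf p^h$ are exact equilibrium strategies for the game with cost $h$.

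First I would verify the approximation guarantee for the Hider strategy $\mathbf x^h$. By definition of what it means to be an approximate equilibrium strategy, I need to show $\min_{\mathbf p} C_f(\mathbf p, \mathbf x^h) \ge (1-\kappa) V_f$. The key steps: for any Searcher strategy $\mathbf p$, apply the lower sandwich inequality to get $C_f(\mathbf p, \mathbf x^h) \ge (1-\kappa) C_h(\mathbf p, \mathbf x^h)$; then use that $\mathbf x^h$ is an equilibrium strategy for the $h$-game, so $C_h(\mathbf p, \mathbf x^h) \ge V_h$ for every $\mathbf p$. This gives $\min_{\mathbf p} C_f(\mathbf p, \mathbf x^h) \ge (1-\kappa) V_h$. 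Finally I would relate $V_h$ to $V_f$: since $C_f \le C_h$ pointwise, the value satisfies $V_f \le V_h$, hence $(1-\kappa)V_h \ge (1-\kappa)V_f$. Combining yields $\min_{\mathbf p} C_f(\mathbf p, \mathbf x^h) \ge (1-\kappa)V_f$, establishing the $\left(\frac{1}{1-\kappa}\right)$-approximation for the Hider.

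The symmetric step handles the Searcher strategy $\mathbf p^h$: I want $\max_{\mathbf x} C_f(\mathbf p^h, \mathbf x) \le \frac{1}{1-\kappa} V_f$. For any Hider strategy $\mathbf x$, the upper sandwich inequality gives $C_f(\mathbf p^h, \mathbf x) \le C_h(\mathbf p^h, \mathbf x)$, and since $\mathbf p^h$ is optimal in the $h$-game, $C_h(\mathbf p^h, \mathbf x) \le V_h$. Thus $\max_{\mathbf x} C_f(\mathbf p^h, \mathbf x) \le V_h$. It then remains to bound $V_h$ in terms of $V_f$ from above by $\frac{1}{1-\kappa}V_f$; this follows from the lower sandwich inequality, which forces $V_h \le \frac{1}{1-\kappa} V_f$ (taking an optimal $f$-strategy and comparing values). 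Assembling these gives the Searcher bound.

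The only delicate point is the bookkeeping connecting the two game values $V_f$ and $V_h$ via the pointwise cost inequalities; I would make this precise using the minimax characterization of the value, being careful about the direction of each inequality (the Hider maximizes, the Searcher minimizes). None of the steps involves a genuinely hard calculation — the substance was already done in establishing the curvature sandwich $C_h \ge C_f \ge (1-\kappa)C_h$ and in citing the known equilibria for the modular game. The main obstacle, such as it is, is simply to avoid sign or direction errors in translating the two-sided cost bound into two-sided value bounds and then into the approximation guarantees.
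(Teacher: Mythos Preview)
Your proposal is correct and follows essentially the same approach as the paper: the paper compresses the argument into a single chain $V_h = \max_{\mathbf{x}} C_h(\mathbf p^h,\mathbf{x}) \ge \max_{\mathbf{x}} C_f(\mathbf p^h,\mathbf{x}) \ge V_f \ge \min_{\mathbf p} C_f(\mathbf p,\mathbf{x}^h) \ge (1-\kappa)\min_{\mathbf p} C_h(\mathbf p,\mathbf{x}^h) = (1-\kappa)V_h$, from which both approximation guarantees are read off exactly as you describe. Your two-part presentation (Hider then Searcher) unpacks the same inequalities in the same order.
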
 

\begin{proof}
Let $V_f$ and $V_h$ be the value of the game with cost function $f$ and the game with cost function $h$, respectively. Then
\[
V_h = \max_{\mathbf{x}} C_h(\mathbf p^h,\mathbf{x}) \ge \max_{\mathbf{x}} C_f(\mathbf p^h,\mathbf{x}) \ge V_f \ge \min_{\mathbf p} C_f(\mathbf p,\mathbf{x}^h) \ge (1-\kappa) \min_{\mathbf p} C_h(\mathbf p,\mathbf{x}^h)
=(1-\kappa)V_h.
\]
It follows that $\max_{\mathbf{x}} C_f(\mathbf{p}^h,\mathbf{x})\le V_h \le V_f/(1-\kappa)$ and $\min_{\mathbf{p}} C_f(\mathbf p,\mathbf{x}^h) \ge (1-\kappa)V_h \ge (1-\kappa)V_f$. 
\end{proof}

We now define a notion of series-parallel decomposition for the submodular search game, similar to Definition~\ref{def:reducible} for the submodular search problem. We say the game is series-parallel decomposable if there is an $f$-initial set or if there is a separator of $f$. Note that this is equivalent to saying that the problem of finding a best response to a given Hider strategy $\mathbf x$ is series-parallel decomposable. 

In the case the game is series-parallel decomposable, we can improve upon Proposition~\ref{prop:game}.

\begin{theorem}\label{thm5}
	Suppose the submodular search game is series-parallel decomposable. Then if $f$
	is given by a value oracle, an equilibrium strategy $\mathbf{x}^f$ for the Hider
	can be computed in strongly polynomial time. An equilibrium Searcher strategy %$\mathbf{p}^f$ 
	can also be computed in strongly polynomial time. The value $V$%=V_f(S)$ 
	of the game is
	\begin{align}
	V=\frac 1 2 (f(S)+\Phi), \label{eq:value}
	\end{align}
	where $\Phi = %\Phi_f(S)=
	\sum_{s \in S} \mathbf{x}^f(s) f(s)$.
\end{theorem}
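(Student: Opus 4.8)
The plan is to establish Theorem~\ref{thm5} by induction on the recursive series-parallel decomposition, handling the two decomposition cases separately and using the duality relation $c_{f,g}(\pi) = c_{g^\#,f^\#}(\pi')$ to reduce the $g^\#$-initial case to the $f$-initial case. The base case is a singleton $S=\{s\}$, where the unique strategy for each player gives value $V = f(s)$, and formula~\eqref{eq:value} holds trivially since $\mathbf{x}^f(s)=1$ gives $\Phi = f(s)$ and $\tfrac12(f(S)+\Phi)=f(s)$. For the inductive step I would treat parallel and series decomposition in turn, in each case constructing the Hider and Searcher strategies from the recursively-obtained optimal strategies on the two pieces and verifying the value formula.

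First I would handle the parallel case, where $S = B \cup \bar B$ with $B$ a separator of $f$. Here the game splits into two independent subgames on $B$ and $\bar B$ with values $V_B$ and $V_{\bar B}$, say. The key probabilistic idea, generalizing~\cite{FKR,AlpernLidbetter}, is that the Hider should randomize between the two components in proportion that equalizes the Searcher's cost; concretely, the combined Hider mixes the optimal sub-strategies with weights proportional to $f(B)$ and $f(\bar B)$ (equivalently so that the scaled densities match across the separator), and the Searcher interleaves the two optimal sub-searches. I expect the value to combine additively, with $\Phi = \Phi_B + \Phi_{\bar B}$ and $f(S)=f(B)+f(\bar B)$ by the separator property, so that~\eqref{eq:value} is preserved by direct summation of the two half-formulas. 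I would verify that the constructed $\mathbf{x}^f$ lies in $\tfrac1{f(S)}\mathbb{B}(f)$ as required by Theorem~\ref{cor:hider}, using that the base polyhedron of a direct sum is the direct sum of the base polyhedra.

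Next I would handle the series case, where $I$ is an $f$-initial set (the $g^\#$-initial case following by duality). By Lemma~\ref{lem:initial} every optimal search begins with $I$, so the Searcher first clears $I$ and then searches $\bar I$ optimally with respect to the contracted function $f_{\bar I}$. For the Hider, the intuition is that hiding in $I$ is dominated once the Searcher commits to clearing $I$ first, yet the Hider still benefits from forcing the Searcher to pay $f(I)$ before reaching anything in $\bar I$; the equilibrium Hider strategy should therefore place all probability on $\bar I$ distributed according to the optimal strategy for the contracted subgame. I would then show the value adds as $V = f(I) + V_{\bar I}'$, where $V_{\bar I}'$ is the value of the contracted game, and recombine the two instances of~\eqref{eq:value} to recover the formula for $S$, being careful that $\Phi = \sum_s \mathbf{x}^f(s) f(s)$ is computed with the \emph{original} $f$ rather than the contraction.

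The main obstacle I anticipate is the series case, specifically verifying the value formula~\eqref{eq:value} through the contraction. The difficulty is that $\Phi$ is defined using the singleton values $f(s)$ of the original function, whereas the recursive hypothesis delivers the analogous quantity $\Phi_{\bar I}' = \sum_{s\in\bar I}\mathbf{x}^f(s) f_{\bar I}(s)$ built from the contracted singletons $f_{\bar I}(s) = f(I\cup\{s\})-f(I)$, which differ from $f(s)$. Reconciling these requires exploiting the $f$-initial structure: because $I \subset cl(s)$ for every $s\in\bar I$, one has $f(I\cup\{s\}) = f(s) + f(I)$ is \emph{not} generally true, so I would need to track how the additive constant $f(I)$ interacts with the sum defining $\Phi$ and confirm that the $\tfrac12(f(S)+\Phi)$ form survives the recombination. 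I expect this to reduce to a careful but elementary accounting once the correct relationship between $f(s)$ and $f_{\bar I}(s)$ under the initial-set hypothesis is pinned down, and the duality identity will let me avoid duplicating the argument for the $g^\#$-initial subcase.
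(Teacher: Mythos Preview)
Your inductive framework matches the paper's, but the parallel case is wrong and the series-case obstacle you flag has a one-line resolution you are missing.

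In the parallel (separator) case, the value does \emph{not} combine additively: it is false that $\Phi = \Phi_B + \Phi_{\bar B}$ or that $V = V_B + V_{\bar B}$. Whichever component the Searcher enters second incurs the full cost of the first, so there is a cross term. With the Hider weighting the sub-strategies by $f(B)/f(S)$ and $f(\bar B)/f(S)$ as you propose, one gets $\Phi = \tfrac{f(B)}{f(S)}\Phi_B + \tfrac{f(\bar B)}{f(S)}\Phi_{\bar B}$, a weighted average, not a sum. The Searcher's equilibrium strategy is not an ``interleaving'' either: she searches one component in its entirety before the other, choosing $B$ first with the specific probability $q = \tfrac12 + \tfrac{\Phi_B - \Phi_{\bar B}}{2f(S)}$, chosen precisely to equalize the expected cost against every hiding location. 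You need this explicit $q$ for the Searcher side; on the Hider side, one uses Theorem~\ref{thm1} to take a best response that starts with one component, and the identity $(f(B)+f(\bar B))^2 = f(S)^2$ then collapses the resulting expression to $\tfrac12(f(S)+\Phi)$.

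In the series case your anticipated difficulty is not real. The $f$-initial property says $I \subset cl(\{s\})$ for every $s \in \bar I$; since $f(cl(\{s\})) = f(s)$ and $f$ is monotone, this forces $f(I \cup \{s\}) = f(s)$, hence $f_I(s) = f(s) - f(I)$ exactly. With all Hider mass on $\bar I$ this gives $\Phi' = \Phi - f(I)$, and $V = f(I) + \tfrac12\bigl((f(S)-f(I)) + (\Phi - f(I))\bigr) = \tfrac12(f(S)+\Phi)$ immediately. Finally, there is no $g^\#$-initial subcase in the game: series-parallel decomposability of the game is defined purely via $f$, since $g$ is not fixed but is the Hider's chosen distribution.
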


\begin{proof}
The theorem is proved by induction on the number of hiding locations, $n=|S|$. We write $V=V^f$ and $\Phi = \Phi^f$ to indicate the dependence on $f$. We will define both the Hider's equilibrium strategy $\mathbf x^f$ and an equilibrium strategy $\mathbf p^f$ for the Searcher recursively.

The base case, $n=1$, is immediate, since for $S= \{s\}$ the players both have only one available strategy: $\mathbf x^f (s) = 1$ for the Hider and $\mathbf p^f(\pi)$ for the Searcher, where $\pi$ is the unique permutation of $S$. Then $\Phi^f = f(s) = f(S)$ and $f(S) = V^f = \frac 12 (f(S) + \Phi^f)$.  

For the induction step, there are two cases. The first case is that there is an $f$-initial set $I$. In this case, we claim that $V^f = f(I) + V^{f_I}$. Indeed, the Searcher can ensure that $V^f \le f(I) + V^{f_I}$ by using the strategy $\mathbf p^f$ which searches $I$ in any order then searches $\bar{I}$ according to the mixed strategy $\mathbf p^{f|_I}$. By Lemma~\ref{lem:initial}, the Hider can ensure that $V^f \ge f(I) + V^{f_I}$ by using the strategy $\mathbf x^f$ given by $\mathbf x^f(s) = \mathbf x^{f_I}(s)$ for $s \in \bar{I}$ and $\mathbf{x}^f(s)=0$ for $s \in I$. Hence, by induction, the strategies $\mathbf x^f$ and $\mathbf p^f$ are equilibrium strategies and can be calculated in strongly polynomial time. Furthermore,
\begin{align*}
V^f &= f(I) + \frac 1 2 \left(f_I(\bar{I})+\Phi_{f_I}(\bar{I})\right)\\
& = f(I) + \frac 1 2 \left( (f(S)-f(I)) + (\Phi_f(S) - f(I)) \right) \\
& = \frac 1 2 (f(S)+\Phi).
\end{align*}

The second case is that $f$ has a separator $A$. Then we define $\mathbf x^f$ on $A$ by $\mathbf x^f(s) = \frac{f(A)}{f(S)} \mathbf x^{f|_{A}}(s)$, and on $\bar{A}$ by $\mathbf x^f(s) = \frac{f(\overline{A})}{f(S)} \mathbf x^{f|_{\overline{A}}}(s)$. By Lemma~\ref{lem:sep}, there must be a set of maximum density contained in $A$, and by induction and Theorem~\ref{cor:hider}, $A$ has maximum density. So, by Theorem~\ref{thm1}, there is a best response $\pi$ to $\mathbf{x}^f$ that starts with $A$. By induction, we must have
\begin{align*}
C(\pi, \mathbf{x}^f) &\ge \mathbf{x}^f(A) V_{f|_A}(A) + \mathbf{x}^f(\bar{A})(f(A) + V_{f|_{\overline{A}}}(\bar{A})) \\
&=\frac{f(A)}{f(S)} \cdot \frac{1}{2} (f|_A(A)+\Phi_{f|_A}(A)) + \frac{f(\bar{A})}{f(S)} \cdot \left( f(A) + \frac 1 2 (f|_{\overline{A}}(\bar{A}) + \Phi_{f|_{\overline{A}}}(\bar{A})) \right) \\
& = \frac 1 2 \left( \frac{(f(A)+f(\bar{A}))^2}{f(S)}   +   \mathbf{x}^f(A) \Phi_{f|_A}(A) + \mathbf{x}^f(\bar{A}) \Phi_{f|_{\overline{A}}}(\bar{A})  \right) \\
& = \frac 1 2 (f(S) + \Phi),
\end{align*}
where the final equality comes from the fact that $\Phi_{f|_A}(A) =  \sum_{s \in A} \mathbf{x}^{f|_A}(s)f|_A(s) = \sum_{s \in A} \frac{\mathbf{x}^f(s)}{\mathbf{x}^f(A)} f(s)$, and similarly for $\Phi_{f|_{\overline{A}}}(A)$.

Now we turn to the Searcher's strategy $\mathbf{p}^f$, which, with probability $q$ searches $A$ first according to $\mathbf p^{f_A}$ and otherwise searches $\bar{A}$ first according to $\mathbf p^{f_{\overline{A}}}$, where
\[
q=\frac 1 2 + \frac  {\Phi_{f|_A}(A)-\Phi_{f|_{\overline{A}}}(\bar{A})} {2f(S)}.
\]
We prove by induction that this strategy ensures an expected search cost of at most $V$, where $V$ is given by Equation~(\ref{eq:value}). Let $s \in A$. Then, by induction, the expected search cost $C(\mathbf{p}^f,s)$ satisfies
\begin{align*}
C(\mathbf{p}^f,s) &\le V_{f|_A}(A) + (1-q)f(B)\\
& = \frac 1 2 \left(f(A)+\Phi_{f|_A}(A) \right) + \left( \frac 1 2 + \frac  {\Phi_{f|_B}(B)-\Phi_{f|_A}(A)} {2f(S)} \right) f(B) \\
& = \frac 1 2 \left( f(A) + f(B) + \mathbf{x}^f(A) \Phi_{f|_A}(A) + \mathbf{x}^f(\bar{A}) \Phi_{f|_{\overline{A}}}(\bar{A})  \right) \\
& = \frac 1 2 (f(S)+\Phi) = V. 
\end{align*}
This shows that the value is at most $V$. The case $s \in \bar{A}$ is similar, exchanging the roles of $A$ and $\bar{A}$. 
\end{proof}

Theorem~\ref{thm5} generalizes results in \cite{AlpernLidbetter}
on expanding search on a rooted tree, where it is shown that
the equilibrium Hider strategy is unique and can be computed efficiently, as can an equilibrium Searcher strategy. Expanding search on
a tree is a series-parallel decomposable submodular search game.

We may consider the submodular search game in the context of scheduling jobs with processing times and precedence constraints. One player chooses an ordering of jobs and the other player chooses a job; the payoff is the completion time of the chosen job. We can interpret this as a robust approach to scheduling, in which one job, unknown to the scheduler, has particular importance, and the scheduler seeks a randomized schedule that minimizes the expected completion time of that job in the worst case. This has a natural application to planning a research project or an innovation process, in which there are many directions the project can take, but it is unknown which task will be fruitful. Theorem~\ref{thm5} gives a solution of this scheduling game on series-parallel graphs. An interesting direction for future research would be to study the game on more general partial orders. 

\section{Final remarks}

We have shown that the notion of series-parallel decomposability is useful for solving both the submodular search problem and the submodular search game. A direction for future research could be to find some measure that captures the ``distance'' from being series-parallel decomposable, and show that better approximations can be found when the problem is close to being series-parallel decomposable. It is shown in~\cite{Ambuhl11} that better approximations to the single machine scheduling problem $1|prec|\sum w_j C_j$ can be found when the precedence constraints have low {\em fractional dimension}. It would be interesting to see whether this idea could be generalized to our setting.

The submodular search game that we have studied in this paper is a
zero-sum game between one Searcher and one Hider. In search games on networks,
one usually restricts attention to one Searcher only, since more
Searchers can divide up the space efficiently~\cite[p 15]{AlpernGal}.
However, in a submodular search game, such a division is impossible and it
is interesting to study games with multiple Searchers, which should relate to
multi machine scheduling problems.
Another extension would be to consider search games with selfish Hiders.
Selfish loading games have been studied, and an overview
can be found in~\cite{Vocking}. These are games between one Searcher
and multiple Hiders and a modular payoff function, similar to the
scheduling problem $1 || \sum  w_jC_j$. A study of
submodular search games with selfish Hiders would extend this to
$1\mid prec\mid \sum w_j h(C_j)$, for $h$ concave.

We end with a question. It is known that the complexity of determining an equilibrium
Searcher strategy in a specific search game on a network is NP hard~\cite{Stengel},
see also~\cite{Megiddo}. What is the complexity of determining equilibrium strategies in the submodular search game?

\section*{Acknowledgments.}
The authors would like to thank Christoph D\"urr for pointing out the connection between expanding search and scheduling. 

We would also like to thank three anonymous reviewers whose comments and suggestions inspired us to greatly improve the paper. In particular, we thank a reviewer for drawing our attention to the work of Pisaruk~\cite{Pisaruk92} and another reviewer for suggesting a more general notion of series-parallel decomposability and for corrections to the proof of Theorem~\ref{thm5}.

L\'aszl\'o A. V\'egh was supported by EPSRC First Grant EP/M02797X/1.

\end{document}